\documentclass[a4paper, 10pt]{amsart}

\oddsidemargin 0.0in
\evensidemargin 0.0in 
\textwidth 6.1in

\usepackage[T1]{fontenc}
\usepackage[english]{babel}

\newtheorem{theo}{Theorem}[section]
\newtheorem{prop}[theo]{Proposition}
\newtheorem{defi}[theo]{Definition}
\newtheorem{lemm}[theo]{Lemma}
\newtheorem{coro}[theo]{Corollary}
\newtheorem{conj}[theo]{Conjecture}
\newtheorem{rema}[theo]{Remark}

\newcommand{\mb}{\mathbb}
\newcommand{\mc}{\mathcal}
\newcommand{\mf}{\mathfrak}

\newcommand{\ra}{\rightarrow}

\usepackage{amssymb}
\usepackage{amsmath}

\usepackage[latin1]{inputenc}



\tolerance 400
\pretolerance 200

\title{Density of crystalline points on unitary Shimura varieties}
\date {6th June 2012}
\author{Przemys\l aw Chojecki}
\email{chojecki@math.jussieu.fr}

\begin{document}

\def\smfbyname{}

\begin{abstract}
We prove that crystalline points are dense in the spectrum of the completed Hecke algebra on unitary Shimura varieties.
\end{abstract}
\maketitle
\tableofcontents

\section{Introduction}

Recently, Matthew Emerton has proved local-global compatibility conjecture for $GL_2 /\mb{Q}$ in \cite{em1}. The proof (of the weak version of compatibility) crucially relies on the density of crystalline points in the completed cohomology of modular curves. We take on this result and generalise it to unitary Shimura varieties considered by Harris-Taylor in \cite{ht}. This is done in corollary 4.12. The result might be seen as the automorphic analogue of the fact, proved recently by Kentaro Nakamura in \cite{na}, that for a p-adic field $K$, $n$-dimensional crystalline representations of $Gal(\bar{K}/K)$ are Zariski dense in the rigid analytic space associated to the universal deformation ring of a $n$-dimensional mod $p$ representation of $Gal(\bar{K}/K)$. Actually the proof of this result does not use in any essential way the fact that we are dealing with unitary Shimura varieties and we could generalize it further to many other Shimura varieties of PEL-type. We refreined from doing so, mostly because of the direct link to known results about Galois representations (which are discussed in section 5).

The techniques which we use in the proof are those of Emerton and we are fairly close to his exposition, though the details differ due to some technical difficulties. For example, we are no longer dealing with curves and we have to be careful about the higher cohomology groups. We circumvent it by introducing a notion of a cohomologically Eisenstein ideal and localising all the cohomology groups at a fixed cohomologically non-Eisenstein ideal. In the last section, we will give some criteria due to Emerton-Gee and Helm for an ideal of a Hecke algebra to be cohomologically non-Eisenstein to show that it is a very natural definition.
\newline
\newline
\textbf{Acknowledgements.} I would like to thank heartfully Jean-Francois Dat, who provided me with several very useful insights and answered many of my questions.

\section{Notation and definitions}

Let $L$ denote an imaginary quadratic field in which $p$ splits. We will let $c$ denote the complex conjugation. Choose a prime $u$ above $p$. Let $F^+$ denote a totally real field of degree $d$. Set $F = L F^+$. We will assume that $p$ is totally decomposed in $F$. Let $D/F$ be a division algebra of dimension $n^2$ such that $F$ is the centre of $D$, the opposite algebra $D ^{op}$ is isomorphic to $D \otimes _{L,c} L$ and $D$ is split at all primes above $u$. We choose an involution of the second kind $*$ on $D$ and assume that there exists a homomorphism $h : \mb{C} \ra D _{\mb{R}}$ for which $b \mapsto h(i)^{-1} b^* h(i)$ is a positive involution on $D _{\mb{R}}$.

We define the reductive group
$$G(R) = \{ (\lambda ,g) \in R^{\times} \times  D^{op} \otimes _{\mb{Q}} R | g g^* = \lambda \}$$
We will assume that $G$ is a unitary group of signature $(n-1,1)$ for one (fixed) infinite place and a unitary group of signature $(0,n)$ at all the other infinite places, so that we are in the situation considered by Harris and Taylor in \cite{ht}.
\newline




Let us also choose a $p$-adic field $E$ with the ring of integers $\mc{O}$ and the residue field $k$.
\newline
\newline
We will consider the Shimura varieties $S_K$ for $G$ which arises from the moduli problem $M_K$ described as follows: $M_K$ is the functor from the category of pairs $(S, s)$, where $S$ is a connected locally Noetherian $F$-scheme and $s$ is a geometric point of $S$, to the category of sets, defined by sending a pair $(S, s)$ to the set of isogeny classes of quadruples $(A, \lambda, i, \bar{\alpha} )$, where
\newline
\newline
(1) $A$ is an abelian scheme over $S$.
\newline
(2) $\lambda : A \rightarrow A^{\vee}$ is a polarization.
\newline
(3) $i : D \hookrightarrow End_S(A) \otimes _{\mb{Z}} \mb{Q}$ such that $\lambda \circ i(f) = i(f^*) ^{\vee} \circ \lambda$ for all $f \in D$
\newline
(4) $\bar{\alpha}$ is a $\pi _1(S,s)$-invariant $K$-orbit of isomorphisms of $F \otimes _{\mb{Q}} \mb{A} _f$-modules $\alpha : V \otimes _{\mb{Q}} \mb{A} _f \simeq VA_s$, which take the pairing $\left<\cdot,\cdot \right>$ on $V = F^n$ to a $\mb{A} _f ^{\times}$-multiple of the $\lambda$-Weil pairing on $VA_s = H_1(A_s, \mb{A} _f)$. For more details, see section 5 of \cite{ko2}.  
\newline
(5) Kottwitz' determinant condition holds, i.e. for each $f\in F$, there is an equality of polynomials $\det _{\mc{O} _S} (f|Lie A)= det _E(f | V^1)$ (here $V^1$ is a certain subspace of $V \otimes _{\mb{Q}} E$). For details, see section 5 of \cite{ko2} or section 5 of \cite{sh}.
\newline
(6) Two such quadruples $(A, \lambda, i, \bar{\alpha} )$ and $(A', \lambda ', i', \bar{\alpha}' )$ are isogenous if there exists an isogeny $A \ra A'$ taking $\lambda, i, \bar{\alpha}$ to $\gamma \lambda, i', \bar{\alpha}' $ for some $\gamma \in \mb{Q} ^{\times}$.
\newline
\newline
The moduli problem $M_K$ is a smooth separated algebraic stack which is representable by a quasi-projective scheme if the objects it parameterizes have no nontrivial automorphism, so in particular when $K$ is sufficiently small, for example when $K$ is neat (for the definition, see below).

Denote by $X$ the set of $G(\mathbb{R})$-conjugates of $h$. Let $S_K$ be the canonical model over $F$ of the Shimura variety whose $\mathbb{C}$ points are defined by:
$$S_H(\mathbb{C}) = G(\mathbb{Q}) \backslash (G(\mathbb{A} _{f}) \times X) / H$$
we have a bijection $S_H(\mathbb{C}) \simeq M_H(\mathbb{C})$ of underlying sets.

Recall that to an algebraic finite-dimensional representation $W$ over $E$ of $G$, we can associate a local system $\mathcal{V}(W)$ on $S_K$ by the construction described in chapter 4 of \cite{ht} (see also chapter 3 of \cite{mi}). Here we use the fact that the Galois group of $S_{K'}$ over $S_{K}$ for $K' \subset K$ is equal to $K/ K'$ (see the beginning of chapter 4 of \cite{ht}).

If $K^p$ is some fixed compact open subgroup of $G(\mathbb{A} _f ^p)$, then we write:

$$H^{i} (K^p) _A = \varinjlim _{K_p}  H^i _{\acute{e}t}((S_{K_p K^p}) _{/\bar{F}}, A)$$ 

where the inductive limit is taken over all the compact open subgroups $K_p$ of $G(\mathbb{Q} _p)$ and where $A$ denotes one of $E, \mathcal{O}, \mathcal{O}/\varpi ^s \mathcal{O}$. Write also

$$\widehat{H} ^{i} (K^p) _{\mathcal{O}} = \varprojlim _{s} H^{i} (K^p) _{\mathcal{O}} / \varpi ^s H^{i} (K^p) _{\mathcal{O}} $$

To a finite-dimensional representation $W$ of $G$ over $E$ associate an automorphic vector bundle $\mc{V} _W$ on $S_K$ and define a cohomology group by
$$H^{i}(\mathcal{V} _W) _E = \varinjlim _{K} H^i _{\acute{e}t}((S_{K}) _{/\bar{F}}, (\mathcal{V} _W)_E)$$
where $(\mathcal{V} _W)_E$ denotes the sheaf $\mathcal{V} _W$ with coefficients extended to $E$.
\newline
\newline
Consider for the moment the general situation when $G_w$ is some unramified group over $F_w$ where $w$ is some place of $F$. Choose $K_w$ a hyperspecial subgroup of $G_w$, define the Hecke algebra $\mc{H} _w (G_w)$ as the set of compactly supported $K_w$-biinvariant $\mc{O}$-valued functions on $G_w$. The structure of algebra comes from the convolution. Normalise the Haar measure on $G_w$ so that $K_w$ has volume 1. For an unramified representation $\pi$ of $G_w$, define $\chi _{\pi}: \mc{H} _w(G_w) \ra \mc{O}$ by  $f \mapsto \textrm{tr} \pi (f)$. It is known that $\pi \mapsto \chi _{\pi}$ gives a bijection between unramified representations and the characters of Hecke algebra (see 1.1 of \cite{sh}, also for the references to the proof).

For an algebraic group $G$ over a number field $F$, let $\Sigma$ be a finite set containing all the primes of $F$ at which $G$ is ramified. We put
$$\mc{H} _{\Sigma} (G) = \otimes _{w \not \in \Sigma} \mc{H} _w (G(F_w))$$

Enlarge $\Sigma$ to contain all the places where $K^p$ is not hyperspecial. For a compact open subgroup $K^p$ of $G(\mathbb{A} _f ^p)$ and a compact open sugroup $K_p$ of $G(\mathbb{Q} _p)$ which is normal in $G(\mb{Z}_p)$, let $\mb{T} (K_p K^p) _{\mc{O}}$ denote the image of $\mc{H} _{\Sigma} (G)$ in $\textrm{End} _{\mc{O}[G(\mb{Z}_p)/K_p]} (R\Gamma(K_pK^p, \mc{O}))$ where $R\Gamma(K_pK^p, \mc{O})$ is the cohomology complex of $S_{K_pK^p}$ with coefficients in $\mc{O}$ and endomorphisms are considered in the derived category of $\mc{O}[G(\mb{Z}_p)/K_p]$-modules. Observe that this algebra acts by functoriality on all the cohomology groups.
We will omit often subscript $\mc{O}$ from the notation. If $K_p ' \subset K_p$ is an inclusion then there is a natural surjection $\mathbb{T}  (K_p ' K^p) \rightarrow \mathbb{T}   (K_p K^p)$ which comes from the Hochschild-Serre spectral sequence  $R\Gamma(K_p/K'_p, R\Gamma(K_p' K^p, \mc{O})) \simeq R\Gamma(K_p K^p, \mc{O})$, where we have written $R\Gamma (K_p /K_p',-)$ for the derived complex of the functor $I \mapsto I^{K_p/K_p'}$. Define $\mathbb{T}   (K^p) = \varprojlim _{K_p} \mathbb{T}  (K_pK^p)$ and equip it with its projective limit topology, each of the $\mathcal{O}$-algebras $\mathbb{T}  (K_pK^p)$ being equipped with its $\varpi$-adic topology.




We also define the localisation of the completed cohomology groups at the maximal ideals of the Hecke algebra. For a maximal ideal $\frak{m}$ of $\mc{H} _{\Sigma}  (G)$ and $A = \mathcal{O}$ or $E$, write:
$$\widehat{H} ^{i} (K^p) _{A, \frak{m}} = \mathbb{T}  (K^p) _{\frak{m}} \otimes _{\mathbb{T}   (K^p)} \widehat{H}^{i} (K^p) _{A}$$
where by $\mathfrak{m}$ above, we mean the image of $\mathfrak{m}$ in $\mathbb{T} (K^p)$ (that is, the inverse limit over $K_p$ of images of $\mathfrak{m}$ in each $\mathbb{T} (K_pK^p)$) and 
$$\widehat{H} ^{i} _{A, \frak{m}} = \varinjlim _{K ^p} \widehat{H} ^{i} (K^p) _{A, \frak{m}}$$

Let us now recall some definitions from \cite{em4}. 

\begin{defi}

Let $V$ be a representation of $G$ over $E$. Here $G$ is the group of $\mb{Q} _p$-points in some connected linear algebraic group $\mb{G}$ over $\mb{Q} _p$. For a finite dimensional, algebraic representation $W$ of $\mb{G}$ over $E$, we will write $V_{W-la}$ for the locally $W$-algebraic vectors of $V$ for the action $G$. We say that a vector $v$ in $V$ is locally $W$-algebraic if there exists an open subgroup $H$ of $G$, a natural number $n$, and an $H$-equivariant homomorphism $W^n \ra V$ whose image contains the vector $v$. It is proved in proposition 4.2.2 of \cite{em4}, that $V_{W-la}$ is a $G$-invariant subspace of $V$.

We say that a vector $v$ in $V$ is locally algebraic, if it is locally $W$-algebraic for some finite dimensional algebraic representation $W$ of $\mb{G}$. The set of all locally algebraic vectors of $V$ is a $G$-invariant subspace of $V$,  which we denote by $V _{la}$ (see proposition 4.2.6 of \cite{em4}).

At the beginning of section 3, we will also use the notion of the locally analytic vectors to state lemma 3.2. For the definition, the reader should consult definition 3.5.3 of \cite{em4}. We will denote by $V_{lan}$ the set of locally analytic vectors.

\end{defi}

Let $A$ be a ring, and let $\Gamma$ be a profinite group. Then we define the completed group ring
$$A[[\Gamma]] = \varprojlim _{H} A[\Gamma /H]$$
where $H$ runs over the open subgroups of $\Gamma$.

\section{Cohomology}

We start with the following definition
\begin{defi}
A maximal ideal $\mathfrak{m}$ of $\mathbb{T} (K) $ is cohomologically Eisenstein if $H^i((S_K) _{\bar{F}}, \mathbb{F} _p) _{\mathfrak{m}}$ is non-zero for some $i\not = n-1$. For a compact open subgroup $K^p$ of $G(\mathbb{A} _f ^p)$, A maximal ideal $\mf{m}$ of $\mb{T}(K^p)$ is cohomologically Eisenstein if $H^i((S_{K_pK^p}) _{\bar{F}}, \mathbb{F} _p) _{\mathfrak{m}}$ is non-zero for some $i\not = n-1$ and some compact open subgroup $K_p$ of $G(\mb{Q}_p)$.
\end{defi}

Let us remark, that by the long exact sequence associated to $0 \ra  \mb{Z}_p \ra \mb{Z}_p \ra \mb{F} _p \ra 0$, the maximal ideal $\mathfrak{m}$ of $\mathbb{T} (K) $ is cohomologically Eisenstein if $H^i((S_K) _{\bar{F}}, \mathbb{Z} _p) _{\mathfrak{m}}$ is non-zero for some $i\not = n-1$ or $H^{n-1}((S_K) _{\bar{F}}, \mathbb{Z} _p) _{\mathfrak{m}}$ is not torsion-free.


Let $\frak{m}$ be a cohomologically non-Eisenstein maximal ideal of some $\mathbb{T}(K)$.

\begin{lemm} 
There is a natural isomorphism
$$H^{n-1}(\mathcal{V} _W)_{E, \mathfrak{m}} \simeq Hom _{\mathfrak{g}}(   W^{\vee}, (\widehat{H} ^{n-1} _{E, \mathfrak{m}}) _{lan})$$
Here $(\widehat{H} ^{n-1} _{E, \mathfrak{m}}) _{lan}$ denotes the set of locally analytic vectors and $\mf{g}$ is the Lie algebra of $G(\mathbb{Q} _p)$
\end{lemm}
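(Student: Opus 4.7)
The plan is to derive this from Emerton's general spectral sequence relating classical cohomology of automorphic local systems to locally analytic vectors in completed cohomology, combined with the vanishing supplied by the cohomologically non-Eisenstein hypothesis.

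First, I would invoke the Emerton spectral sequence (Corollary 2.2.18 / Theorem 0.5 of \cite{em4}, applied here with $G = G(\mathbb{Q}_p)$ acting on the tower $\{S_{K_p K^p}\}_{K_p}$):
$$E_2^{i,j} = \mathrm{Ext}^i_{\mathfrak{g}}\bigl(W^{\vee},\, (\widehat{H}^{j}(K^p)_E)_{lan}\bigr) \;\Longrightarrow\; H^{i+j}(\mathcal{V}_W)_{E},$$
at each tame level $K^p$. The $\mathcal{H}_\Sigma(G)$-action is functorial in both variables, so the spectral sequence is one of $\mathcal{H}_\Sigma(G)$-modules, and I can localize at $\mathfrak{m}$. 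Passing to the direct limit over $K^p$ (which commutes with both $(-)_{lan}$, since locally analytic vectors commute with filtered colimits of admissible Banach representations of compact groups, and with $\mathrm{Ext}_\mathfrak{g}$ of a finite-dimensional $W^\vee$) yields a spectral sequence with abutment $H^{i+j}(\mathcal{V}_W)_{E,\mathfrak{m}}$ and $E_2$-page $\mathrm{Ext}^i_\mathfrak{g}(W^\vee, (\widehat{H}^j_{E,\mathfrak{m}})_{lan})$.

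Second, I would establish that $(\widehat{H}^{j}_{E, \mathfrak{m}})_{lan} = 0$ for every $j \neq n-1$. The input is precisely the remark just preceding the lemma: since $\mathfrak{m}$ is cohomologically non-Eisenstein, the mod $p$ Hecke-localized cohomology vanishes outside degree $n-1$; the long exact sequence associated to $0 \to \mathbb{Z}_p \to \mathbb{Z}_p \to \mathbb{F}_p \to 0$, together with the topological Nakayama lemma applied to the compact $\mathcal{O}$-modules $\widehat{H}^{j}(K^p)_{\mathcal{O}, \mathfrak{m}}$, then forces $\widehat{H}^{j}_{\mathcal{O},\mathfrak{m}} = 0$ for $j \neq n-1$ and $\widehat{H}^{n-1}_{\mathcal{O},\mathfrak{m}}$ to be $\varpi$-torsion free. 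Inverting $p$ gives vanishing of $\widehat{H}^{j}_{E,\mathfrak{m}}$ outside degree $n-1$, and taking locally analytic vectors is a left-exact functor on admissible Banach representations, hence preserves this vanishing.

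Third, with $E_2^{i,j} = 0$ except when $j = n-1$, the spectral sequence degenerates to a single row and produces isomorphisms
$$H^{i+n-1}(\mathcal{V}_W)_{E, \mathfrak{m}} \;\simeq\; \mathrm{Ext}^i_{\mathfrak{g}}\bigl(W^{\vee},\, (\widehat{H}^{n-1}_{E, \mathfrak{m}})_{lan}\bigr).$$
The case $i = 0$ is exactly the statement of the lemma.

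The main obstacle is the second step: showing that localization at $\mathfrak{m}$ commutes with both the formation of completed cohomology and the passage to locally analytic vectors, and then translating the characteristic-$p$ vanishing (which is what cohomologically non-Eisenstein directly provides) into vanishing of the admissible Banach $G(\mathbb{Q}_p)$-representation $\widehat{H}^j_{E,\mathfrak{m}}$ and its locally analytic subspace. Once that vanishing is in place, the collapse of the spectral sequence and the identification in the lemma are formal.
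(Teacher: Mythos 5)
Your proposal follows essentially the same route as the paper: invoke Emerton's spectral sequence $\mathrm{Ext}^i_{\mathfrak{g}}(W^{\vee}, (\widehat{H}^{j}_E)_{lan}) \Rightarrow H^{i+j}(\mathcal{V}_W)$, localize at the cohomologically non-Eisenstein ideal $\mathfrak{m}$, and use the vanishing $\widehat{H}^{j}_{E,\mathfrak{m}} = 0$ for $j \neq n-1$ to collapse it to the $i=0$ edge isomorphism. Your extra detail on deducing that vanishing from the mod $p$ definition (long exact sequence plus Nakayama) is a correct filling-in of what the paper leaves implicit; note only that the spectral sequence is from \cite{em3}, not \cite{em4}.
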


\begin{proof}
This results from the Emerton spectral sequence (2.2.18 of \cite{em3}; see also (2.4) of \cite{em3} where it is explained how the result carries over for general Shimura varieties):
$$ \textrm{Ext} ^i _{\mathfrak{g}} (W^{\vee}, (\widehat{H} ^{j} _E) _{lan}) \Rightarrow H^{i+j}(\mathcal{V} _W)$$
after we localise it at the cohomologically non-Eisenstein ideal $\mathfrak{m}$, because $\widehat{H} ^{j} _{E, \mathfrak{m}} = 0$ for $j \not = n-1$.
\end{proof}


Let $W _i$, ($i\in I$) be a complete set of isomorphism class representatives of the irreducible algebraic representations of $G$ over $E$. Let $B_i = End _G(W_i)$. We obtain


\begin{prop} 

The evaluation map gives a $G_{F} \times G(\mathbb{A} _f)$-equivariant isomorphism:
$$\bigoplus _{i \in I, n\in \mathbb{Z}} H^{n-1}(\mathcal{V} _{W_i})_{E,  \mathfrak{m}} \otimes _{B_i} W_{i}^{\vee}  \simeq (\widehat{H} ^{n-1} _{E, \mathfrak{m}} ) _{la}$$

Here $G_{F} \times G(\mathbb{A} _f)$ acts on $W_i ^{\vee}$ through its quotient $G(\mathbb{Q} _p)$.
\end{prop}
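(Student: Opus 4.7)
The plan is to deduce the proposition by combining Lemma 3.2 with the general structural decomposition of locally algebraic vectors established in \cite{em4}. Concretely, for an admissible continuous representation $V$ of $G(\mb{Q}_p)$ on an $E$-Banach space, the results of section 4.2 of \cite{em4} give a canonical direct sum decomposition
$$V_{la} \;\simeq\; \bigoplus_{i \in I} \operatorname{Hom}_{\mathfrak{g}}(W_i^{\vee}, V_{lan}) \otimes_{B_i} W_i^{\vee},$$
with the isomorphism realised by the evaluation map $\phi \otimes w \mapsto \phi(w)$. This is essentially the assertion that on the locally analytic subspace the $W_i^{\vee}$-isotypic component for the $\mathfrak{g}$-action is precisely $\operatorname{Hom}_{\mathfrak{g}}(W_i^{\vee}, V_{lan}) \otimes_{B_i} W_i^{\vee}$, and that the sum of these isotypic components exhausts $V_{la}$.

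First I would apply this decomposition to $V = \widehat{H}^{n-1}_{E,\mathfrak{m}}$. This requires knowing that $\widehat{H}^{n-1}_{E,\mathfrak{m}}$ is an admissible locally analytic $G(\mb{Q}_p)$-representation. The admissibility of completed cohomology of Shimura varieties is established in \cite{em3}; localisation at $\mathfrak{m}$ preserves admissibility since it corresponds to taking a direct summand, and the functor $(-)_{lan}$ commutes with such localisation (as it is given by Hecke-equivariant operations on vectors). Second, I would substitute the identification of Lemma 3.2,
$$\operatorname{Hom}_{\mathfrak{g}}\!\bigl(W_i^{\vee}, (\widehat{H}^{n-1}_{E,\mathfrak{m}})_{lan}\bigr) \;\simeq\; H^{n-1}(\mc{V}_{W_i})_{E,\mathfrak{m}},$$
term by term into the decomposition, which yields the stated isomorphism.

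Finally I would verify the equivariance. The factors $G_F$ and $G(\mb{A}_f^p)$ act on $\widehat{H}^{n-1}_{E,\mathfrak{m}}$ commuting with the $G(\mb{Q}_p)$-action (Galois and prime-to-$p$ Hecke actions commute with the $p$-adic action), hence they preserve $(-)_{lan}$ and $(-)_{la}$ and act on the $\operatorname{Hom}_{\mathfrak{g}}$ spaces, i.e.\ on the cohomology factors $H^{n-1}(\mc{V}_{W_i})_{E,\mathfrak{m}}$, while acting trivially on $W_i^{\vee}$ (which carries only the $G(\mb{Q}_p)$-action). The evaluation map is manifestly equivariant for these actions.

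The main obstacle I anticipate is the admissibility/locally analytic structure on $\widehat{H}^{n-1}_{E,\mathfrak{m}}$ and the compatibility of the decomposition theorem of \cite{em4} with our localisation at $\mathfrak{m}$: one must check that passing to the non-Eisenstein component does not spoil the admissibility hypothesis, and that the isotypic decomposition is compatible with the Hecke action in the required way. Once this foundational point is settled, the rest is a formal substitution using Lemma 3.2.
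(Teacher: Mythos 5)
Your proposal is correct and follows essentially the same route as the paper: the paper likewise combines Corollary 4.2.7 and Proposition 4.2.10 of \cite{em4} (which together give exactly the decomposition of $V_{la}$ into $\mathrm{Hom}_{\mathfrak{g}}(W_i, V_{lan})\otimes_{B_i}W_i$ summands that you state) with Lemma 3.2 applied to $V = \widehat{H}^{n-1}_{E,\mathfrak{m}}$. Your additional remarks on admissibility and equivariance are sound elaborations of points the paper leaves implicit.
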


\begin{proof}

Let us put $V = \widehat{H} ^{n-1} _{E, \frak{m}}$. By the corollary 4.2.7 in \cite{em4}, we have:
$$\bigoplus _{W} V_{W-la} \simeq V_{la}$$
where the sum runs over the complete set of isomorphism class representatives of the irreducible algebraic representations $W$ of $G$. By Proposition 4.2.10 of \cite{em4} we have a topological isomorphism
$$Hom _{\mf{g}}(W, V_{lan}) \otimes _{B} W \simeq V_{W-la}$$
where $B = End _G(W)$. We use these facts and the lemma 3.2 to conclude.

\end{proof}

We say that a cuspidal automorphic representation $\pi = \pi _{\infty} \otimes \pi _f$ of $G(\mathbb{A})$ occurs in $H^{i}(S_K, \mathcal{V} _W)$ if the $\pi _f$-isotypical component $W_{\pi}  ^i = Hom _{G(\mathbb{A} _f)}(\pi _f, H^{i}(\mathcal{V} _W))$ is non-zero and $\pi _{\infty}$ is cohomological for the representation $W$. We have a decomposition:
\begin{enumerate}
\item[(A)]
\hfill $H^{i}(S_K, \mathcal{V} _W) = \bigoplus _{\pi} \pi _f ^K \otimes W _{\pi} ^i$ \hfill~
\end{enumerate}
where $\pi$ runs over automorphic representations of weight $W$.

\section{Density result}

Fix a finite set $\Sigma _0$ of rational primes in $\mb{Q}$, not containing $p$ and containing all the rational primes which divide the primes in $F$ at which $D$ is ramified. We will also denote by $\Sigma _0$ the set of primes in $F$ (or in $F^+$) which lie over those in $\Sigma _0$. It should not cause any confusion. Let $\Sigma = \Sigma _0 \cup \{p \}$.

Let $K_0 ^{\Sigma } = \prod _{l \notin \Sigma} G(\mathbb{Z} _{l})$ and let $G = G(\mathbb{Q} _p)$. Define also $G _{\Sigma  _0} = \prod _{l \in \Sigma _0} G(\mathbb{Q} _l)$ and $G_{\Sigma} = G(\mb{Z}_p)G_{\Sigma_0}$. We fix a maximal compact subgroup $K_{\Sigma, 0} = \prod _{l \in \Sigma} G(\mb{Z}_l)$ and in the rest of this article we will consider only those compact open subgroups $K_{\Sigma} \subset G_{\Sigma}$ which are normal in $K_{\Sigma, 0}$. For such a compact open sugroup $K_{\Sigma} \subset G_{\Sigma}$, we write $\mathbb{T}  (K_{\Sigma})$ for the image of $\mc{H} _{\Sigma} (G)$ in $End _{\mc{O}[K_{\Sigma, 0}/K_{\Sigma}]}(R\Gamma(K_{\Sigma}K^{\Sigma} _0))$ (endomorphisms are considered in the derived category of $\mc{O}[K_{\Sigma, 0}/K_{\Sigma}]$-modules). This gives us a compatible action of Hecke algebra on our tour of Shimura varieties. For any such $K_{\Sigma} ' \subset K_{\Sigma}$ we have a surjection $\mb{T}(K_{\Sigma} ') \twoheadrightarrow \mb{T}(K_{\Sigma})$. We define $\mb{T}(K_{\Sigma _0}) = \varprojlim _{K_p} \mb{T}(K_{\Sigma _0} K_p)$. Finally we put $\mb{T} _{\Sigma} = \varprojlim _{K_{\Sigma}} \mb{T}(K_{\Sigma})$. 

We fix a cohomologically non-Eisenstein ideal $\mf{m}$ of $\mb{T} _{\Sigma} = \varprojlim _{K_{\Sigma _0}} \mathbb{T}  (K _{\Sigma _0})$, where cohomologically non-Eisenstein means the vanishing of the cohomology groups of  $H^{i}(S_{K_{\Sigma _0} K_p K^{\Sigma} _0}, \mb{F}_p) _{\mf{m}}$ for $i \not = p$ and for all $K_{\Sigma _0}$ and $K_p$.

\begin{defi}
We call a compact open subgroup $K_{\Sigma _0}$ an allowable level for $\mf{m}$, if the image of $\mf{m}$ in $\mathbb{T}  (K _{\Sigma _0})$ is a proper maximal ideal.
\end{defi}


For $A = E, \mathcal{O}, \mathcal{O}/\varpi ^s \mathcal{O}$, we will write 
$$H^{i}(K_{\Sigma _0}) _A = H^{i}(K_{\Sigma _0} K_0 ^{\Sigma})_A$$
and similarly for $\widehat{H}^{i} (K _{\Sigma _0}) _{A}$.
We also put
$$\widehat{H} ^{i} (K _{\Sigma _0}) _{\mc{O}, \mf{m}} = \mathbb{T}  (K _{\Sigma _0}) _{\mf{m}} \otimes _{\mathbb{T}  (K _{\Sigma _0})} \widehat{H}^{i} (K _{\Sigma _0}) _{\mb{O}}$$
and
$$\widehat{H} ^{i} _{\mc{O}, \mf{m}, \Sigma} = \varinjlim _{K_{\Sigma _0}} \widehat{H} ^{i}(K_{\Sigma _0}) _{\mc{O}, \mf{m}}$$ 
where the limit runs over all the allowable levels $K_{\Sigma _0}$ for $\mf{m}$. By $\widehat{H} ^{i} (K _{\Sigma _0}) _{E, \mf{m}}$ we will mean $\widehat{H} ^{i} (K _{\Sigma _0}) _{\mc{O}, \mf{m}} \otimes _{\mc{O}} E$ and similarly for $\widehat{H} ^{i} (K _{\Sigma _0}) _{E, \mf{m}}$

\begin{rema}

We will use the notion of neatness for compact open subgroups $K_f$ of $G(\mathbb{A} _f)$. For that, see section 0.6 in \cite{pi} for a precise definition. We will only need this condition to ensure that $K_f$ acts on $G(\mb{Q}) \backslash (X \times G(\mathbb{A} _f))$ without fixed points, so that we can use Hochschild-Serre spectral sequence. Actually, any sufficiently small open compact subgroup is neat (see also 0.6 in \cite{pi})

\end{rema}

Let us first consider an auxilary lemma:

\begin{lemm}

Let $\Gamma$ be a finite group and let $V$ be a finitely generated representation of $\Gamma$ over $\mathcal{O}/\varpi ^s \mathcal{O}$. Then if $V$ is of finite injective dimension as a representation of $\Gamma$, then $V$ is injective.

\end{lemm}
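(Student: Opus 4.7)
The plan is to identify $R := \mathcal{O}/\varpi^s\mathcal{O}$ as a self-injective Artinian local ring and then promote this to self-injectivity of the group algebra $R[\Gamma]$. The resulting quasi-Frobenius (QF) structure on $R[\Gamma]$ forces any finitely generated module of finite injective dimension to actually be injective.

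First I would verify that $R$ is injective as a module over itself. Since $R$ is a local Artinian principal ideal ring with ideals $(\varpi^i)$ for $0 \le i \le s$, Baer's criterion reduces to checking that every $R$-linear map $(\varpi^i) \ra R$ extends to $R$, which is immediate because $(\varpi^i)$ is free of rank one. Equivalently, $R$ is a zero-dimensional Gorenstein local ring, hence its own injective hull (of its residue field).

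Next I would invoke the standard Frobenius structure on the group algebra: for any commutative ring $R$ and any finite group $\Gamma$, the symmetric non-degenerate $R$-bilinear pairing
$$R[\Gamma] \times R[\Gamma] \ra R, \qquad \Bigl(\sum_g a_g g,\ \sum_g b_g g\Bigr) \mapsto \sum_g a_g b_{g^{-1}}$$
provides an isomorphism $R[\Gamma] \simeq \mathrm{Hom}_R(R[\Gamma], R)$ of left $R[\Gamma]$-modules. Since $R$ is injective over itself and $R[\Gamma]$ is $R$-free of finite rank, the right-hand side is an injective $R[\Gamma]$-module, so $R[\Gamma]$ is self-injective. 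Together with the fact that $R[\Gamma]$ is Artinian (being finitely generated over the Artinian ring $R$), this makes $R[\Gamma]$ into a QF ring.

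Finally I would apply the classical QF dichotomy: over a QF ring, a module is injective if and only if it is projective, and the projective (equivalently injective) dimension of a finitely generated module is either $0$ or $\infty$. Explicitly, once we know $V$ has finite injective dimension, we get $\mathrm{pd}(V) < \infty$; taking a projective resolution $0 \ra P_n \ra P_{n-1} \ra \cdots \ra P_0 \ra V \ra 0$ of minimal length $n$, the module $P_n$ is injective by QF, so the monomorphism $P_n \hookrightarrow P_{n-1}$ splits, contradicting minimality unless $n=0$. Hence $V$ is projective and therefore injective. I do not foresee a genuine obstacle here; the only thing to be careful about is that the non-commutative ring $R[\Gamma]$ satisfies all the QF axioms, which is precisely what the Frobenius pairing guarantees.
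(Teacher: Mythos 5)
Your proof is correct in substance and is essentially the paper's argument in different clothing: both proofs rest on the Frobenius/self-duality property of $(\mathcal{O}/\varpi^s\mathcal{O})[\Gamma]$, which identifies projectives with injectives, and then split off the end of a finite resolution; the paper just phrases this via Pontryagin duality (pass to $V'$, take a finite projective resolution, dualize back, split), whereas you invoke the quasi-Frobenius formalism by name. Two small repairs are needed in your write-up. First, your Baer-criterion justification that $\mathcal{O}/\varpi^s\mathcal{O}$ is self-injective is off: the ideal $(\varpi^i)$ is \emph{not} free of rank one (it is isomorphic to $\mathcal{O}/\varpi^{s-i}\mathcal{O}$, with annihilator $(\varpi^{s-i})$); the correct point is that any map $f:(\varpi^i)\to R$ must send $\varpi^i$ to an element killed by $\varpi^{s-i}$, hence into $(\varpi^i)$, and such a map visibly extends. (Your parenthetical "zero-dimensional Gorenstein" remark is the right reason.) Second, your final step asserts that finite injective dimension implies $\mathrm{pd}(V)<\infty$, but over a QF ring the only routes to that implication go through knowing $\mathrm{id}(V)=0$, which is what you are trying to prove, so as written the explicit derivation is circular. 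The fix is immediate and brings you back to the paper's argument: run the splitting directly on a finite injective coresolution $0\to V\to I^0\to\cdots\to I^n\to 0$ of minimal length; since $I^n$ is injective, it is projective by QF, so the surjection $I^{n-1}\to I^n$ splits, contradicting minimality unless $n=0$, whence $V$ is a direct summand of $I^0$ and hence injective. With these two adjustments the proof is complete.
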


\begin{proof}

Dualizing the situation, by assumption we know that $V'$ is a finitely generated module over $(\mathcal{O}/\varpi ^s \mathcal{O})[\Gamma]$ which is of finite projective dimension. Take a finite, projective resolution of $V'$
$$ 0 \ra P_n  \ra ... \ra P_1  \ra V' \ra 0$$ 
so that $P_i$ are finitely generated projective $(\mathcal{O}/\varpi ^s \mathcal{O})[\Gamma]$-modules (because $(\mathcal{O}/\varpi ^s \mathcal{O})[\Gamma]$ is self-dual, i.e. $(\mathcal{O}/\varpi ^s \mathcal{O})[\Gamma]' = (\mathcal{O}/\varpi ^s \mathcal{O})[\Gamma]$), and hence each $P_i$ is a direct factor of $(\mathcal{O}/\varpi ^s \mathcal{O})[\Gamma]^{r_i}$ for some $r_i >0$. Dualizing again, we obtain an injective resolution of $V$ of the form
$$0 \ra V \ra P_1 ' \ra ... \ra P_{n-1} ' \ra P_n ' \ra 0$$ 
But each $P_i '$ is again a direct factor of $(\mathcal{O}/\varpi ^s \mathcal{O})[\Gamma]^{r_i}$, as $(\mathcal{O}/\varpi ^s \mathcal{O})[\Gamma]' = (\mathcal{O}/\varpi ^s \mathcal{O})[\Gamma]$, and hence each $P_i '$ is again projective. This means that the last surjection  $P_{n-1} ' \ra P_n '$ splits and hence we can shorten choosen injective resolution. Continuing this process, we arrive at the isomorphism $V \simeq P'$, where $P'$ is some injective module.

\end{proof}

Consider a compact open subgroup $K_p$ of $G$ and fix $K^p \subset G(\mathbb{A} ^p _F)$ as in the lemma below. Recall that the Galois group of $\varprojlim _{K_p ' \subset K_p} S_{K_p 'K^p}$ over $S_{K_pK^p}$ is equal to $K_p$ in the case of Shimura varieties of Harris-Taylor, but for more general Shimura varieties it might be a proper quotient of $K_p$ and hence we will note it $L_p$ for the sequel in order to show that nothing changes in general for the following lemma. We will define in the same manner $L_p'$ for other choice of $K_p ' \subset G$.
\begin{lemm}
If $K_p$ is a compact open subgroup of $G =G(\mathbb{Q} _p)$, and if $K_{\Sigma _0} \subset G_{\Sigma _0}$ is an allowable level, chosen so that $K_p K_{\Sigma _0} K_0 ^{\Sigma _0}$ is neat, then

a) for each $s>0$, $H^{n-1}(K_{\Sigma _0})_{\mathcal{O}/\varpi ^s \mathcal{O}, \mf{m}}$ is injective as a smooth representation of $L_p$ over $\mathcal{O} /\varpi ^s \mathcal{O}$.

b) $H^{i}(M(K_p K_{\Sigma _0} K_0 ^{\Sigma}), \mathcal{W} ^{\vee}) _{\mf{m}} =0$ for all $i \not = n-1$, where $\mathcal{W}$ is a local system induced by a finitely generated smooth representation $W$ of $L_p$ over $\mathcal{O} /\varpi ^s \mathcal{O}$. 
\end{lemm}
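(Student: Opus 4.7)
The plan is to treat both parts via a perfect-complex analysis of the finite \'etale Galois cover
$$S_{K_p'K_{\Sigma _0}K_0^{\Sigma}}\longrightarrow S_{K_pK_{\Sigma _0}K_0^{\Sigma}},$$
where $K_p'\subset K_p$ runs over open subgroups normal in $L_p$, $\Gamma=L_p/L_p'$, and $R=\mc{O}/\varpi^s\mc{O}$. Neatness of $K_pK_{\Sigma _0}K_0^{\Sigma}$ ensures the cover is \'etale Galois with group $\Gamma$, so choosing a finite \'etale hypercover trivializing it exhibits the complex $R\Gamma(S_{K_p'K_{\Sigma _0}K_0^{\Sigma}}, R)$, with its natural $\Gamma$-action, as a perfect complex in $D^b(R[\Gamma])$; $\mf{m}$-localization preserves perfectness since the Hecke action commutes with the $\Gamma$-action.

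For (a), I would first extend the cohomologically non-Eisenstein hypothesis from $\mb{F}_p$- to $R$-coefficients by induction on $s$ using the Bockstein short exact sequences. Then the localized perfect complex has cohomology concentrated in degree $n-1$ and is therefore quasi-isomorphic in $D^b(R[\Gamma])$ to $V^{K_p'}[-(n-1)]$, where $V^{K_p'}:=H^{n-1}(K_p'K_{\Sigma _0})_{R,\mf{m}}$. This displays $V^{K_p'}$ as a finitely generated $R[\Gamma]$-module of finite projective dimension. The group ring $R[\Gamma]$ is a Frobenius (hence self-injective) algebra because $R$ is self-injective and $\Gamma$ is finite; over such a ring, finite projective dimension forces projectivity, and projectivity coincides with injectivity. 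Combined with Lemma 4.4 this shows $V^{K_p'}$ is injective as an $R[\Gamma]$-module, and passing to the limit over $K_p'$ yields injectivity of $V:=H^{n-1}(K_{\Sigma _0})_{R,\mf{m}}$ as a smooth representation of $L_p$ over $R$.

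For (b), pick $L_p'$ so that $W$ is trivial on $L_p'$ and apply Hochschild--Serre for the same cover with twisted coefficients, which yields
$$R\Gamma(S_{K_pK_{\Sigma _0}K_0^{\Sigma}},\mc{W}^{\vee}) \simeq R\Gamma\bigl(\Gamma,\, W^{\vee}\otimes^L_R R\Gamma(S_{K_p'K_{\Sigma _0}K_0^{\Sigma}},R)\bigr).$$
After $\mf{m}$-localizing and plugging in part (a), the inner complex becomes $V^{K_p'}[-(n-1)]$ with $V^{K_p'}$ projective over $R[\Gamma]$ and, in particular, free over $R$, so the derived tensor is underived. The ``Frobenius twist'' identification $W^{\vee}\otimes_R R[\Gamma]\simeq \textrm{Ind}_{\{1\}}^{\Gamma}(W^{\vee}|_R)$ combined with Shapiro's lemma imply that $W^{\vee}\otimes_R P$ is $\Gamma$-cohomologically trivial for every projective $R[\Gamma]$-module $P$, so $R\Gamma(\Gamma, W^{\vee}\otimes_R V^{K_p'})$ is concentrated in degree $0$, giving the desired concentration of $H^{\bullet}(M(K_pK_{\Sigma _0}K_0^{\Sigma}),\mc{W}^{\vee})_{\mf{m}}$ in degree $n-1$.

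The step likely to require the most care is the perfect-complex claim in $D^b(R[\Gamma])$, together with its compatibility with the Hecke localization at $\mf{m}$; once that is in place the remainder is routine Frobenius-algebra bookkeeping and a direct invocation of Lemma 4.4.
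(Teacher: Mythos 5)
Your proposal is correct in substance but reaches the key finiteness statement by a genuinely different mechanism than the paper. Where you invoke equivariant perfectness of $R\Gamma(S_{K_p'K_{\Sigma_0}K_0^{\Sigma}},R)$ in $D^b(R[\Gamma])$ (a real theorem --- Deligne's finiteness results applied to the rank-one local system of $R[\Gamma]$-modules $\pi_*R$ on the proper Harris--Taylor variety, not just a hypercover formality), the paper instead runs the Hochschild--Serre spectral sequence for an arbitrary finitely generated smooth $W$, observes that after localization at $\mf{m}$ it degenerates to $\mathrm{Ext}^i_{L_p/L_p'}(W,H^{n-1}(K_{\Sigma_0})^{L_p'}_{\mc{O}/\varpi^s\mc{O},\mf{m}})\simeq H^{n-1+i}(M(K_pK_{\Sigma_0}K_0^{\Sigma}),\mc{W}^{\vee})_{\mf{m}}$, and uses vanishing of cohomology above degree $2n-2$ to bound the injective dimension by $n-1$; its Lemma 4.4 then plays exactly the role of your Frobenius-algebra step (that lemma \emph{is} the self-injectivity of $R[\Gamma]$ in disguise, so citing both is redundant but harmless). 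Your route buys a cleaner conceptual statement at the cost of a heavier geometric input; the paper's is elementary given Hochschild--Serre. Your part (b), via untwisting $W^{\vee}\otimes_R R[\Gamma]$ and Shapiro, is fine and equivalent to the paper's vanishing of $\mathrm{Ext}^{>0}_{L_p}(W,-)$.

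The one place you are too quick is ``passing to the limit over $K_p'$'': this is precisely where the paper spends the entire second half of its proof of (a). Injectivity of each $V^{K_p'}$ over $R[L_p/L_p']$ does not formally give injectivity of the colimit as a smooth $L_p$-representation. One needs, first, that $V^{K_p'}$ is identified with the $L_p'$-invariants of $V$ (this follows from the degeneration of Hochschild--Serre after localization, and you never state it), and then either the paper's argument (the $L_p'$-acyclicity of $V$ together with the composite-functor spectral sequence computing $\mathrm{Ext}^{\bullet}_{L_p}$ from $\mathrm{Ext}^{\bullet}_{L_p/L_p'}$) or, dually, the standard fact that a profinite $R[[L_p]]$-module whose coinvariants at every finite level form a compatible system of projectives is itself projective. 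Either fix is routine, but it is a genuine step and must be supplied.
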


\begin{proof}
a) Take a finitely generated smooth representation $W$ of $L_p$ over $\mathcal{O} /\varpi ^s \mathcal{O}$, consider its smooth Pontrjagin dual and the constant local system $\mathcal{W} ^{\vee}$ it induces on our Shimura variety $S_{K_p K_{\Sigma _0} K_0 ^{\Sigma _0}}$ by the well-known correspondence between local systems and representations of the Galois group.
For $K_p '$ sufficiently small (i.e. for which $L_p '$ acts trivially on $W$ and hence $\mathcal{W}$ is a constant sheaf on $M(K_p ' K_{\Sigma _0} K_0 ^{\Sigma})$) we have 
$$H^{r}(M(K_p ' K_{\Sigma _0} K_0 ^{\Sigma}), \mathcal{W} ^{\vee}) \simeq H^{r}(M(K_p ' K_{\Sigma _0} K_0 ^{\Sigma}), \mathcal{O} /\varpi ^s \mathcal{O}) \otimes W ^{\vee} \simeq Hom  (W, H^{r}(M(K_p ' K_{\Sigma _0} K_0 ^{\Sigma}), \mathcal{O} /\varpi ^s \mathcal{O}))$$
Consider Hochschild-Serre spectral sequence:
$$H^i(L_p / L_p ', H^j(M(K_p' K_{\Sigma _0} K_0 ^{\Sigma}), \mathcal{W} ^{\vee})) \Rightarrow H^{i+j}(M(K_p K_{\Sigma _0} K_0 ^{\Sigma}), \mathcal{W} ^{\vee})$$
Taking for the moment $W$ to be a trivial representation and localising the spectral sequence at $\mf{m}$ which is cohomologically non-Eisenstein, we get by looking at the $i+j=n-1$ an isomorphism
$$H^{n-1}(M(K_p' K_{\Sigma _0} K_0 ^{\Sigma}), \mathcal{O}/\varpi ^s \mathcal{O}) _{\mf{m}} ^{L_p} \simeq H^{n-1}(M(K_p K_{\Sigma _0} K_0 ^{\Sigma}), \mathcal{O}/\varpi ^s \mathcal{O}) _{\mf{m}}$$
Take direct limit over $K_p '$ to obtain
$$H^{n-1}(K_{\Sigma _0}) ^{L_p} _{\mathcal{O}/\varpi ^s \mathcal{O}, \mf{m}} \simeq H^{n-1}(M(K_p K_{\Sigma _0} K_0 ^{\Sigma}), \mathcal{O}/\varpi ^s \mathcal{O}) _{\mf{m}}$$
Using this isomorphism for $K_p '$ and the isomorphism mentioned before we have
$$H^i(L_p / L_p ', H^j(M(K_p' K_{\Sigma _0} K_0 ^{\Sigma}), \mathcal{W} ^{\vee})) \simeq H^i(L_p / L_p ' , Hom  (W, H^{j}(K_{\Sigma _0}) ^{L_p '} _{\mathcal{O}/\varpi ^s \mathcal{O}, \mf{m}})) \simeq \textrm{Ext} ^i _{L_p / L_p '} (W, H^{j}(K_{\Sigma _0}) ^{L_p '} _{\mathcal{O}/\varpi ^s \mathcal{O}, \mf{m}})$$
and so our spectral sequence transforms to
$$\textrm{Ext} ^i _{L_p / L_p '} (W, H^{j}(K_{\Sigma _0}) ^{L_p '} _{\mathcal{O}/\varpi ^s \mathcal{O}, \mf{m}}) \Rightarrow H^{i+j}(M(K_p K_{\Sigma _0} K_0 ^{\Sigma}), \mathcal{W} ^{\vee}) _{\mf{m}}$$
and hence
$$\textrm{Ext} ^i _{L_p / L_p '} (W, H^{n-1}(K_{\Sigma _0}) ^{L_p '} _{\mathcal{O}/\varpi ^s \mathcal{O}, \mf{m}}) \simeq H^{n-1+i}(M(K_p K_{\Sigma _0} K_0 ^{\Sigma}), \mathcal{W} ^{\vee}) _{\mf{m}}$$
Observe that, because $H^{i+j}(M(K_p K_{\Sigma _0} K_0 ^{\Sigma}), \mathcal{W} ^{\vee}) _{\mf{m}} = 0$ for $i+j > 2n-2$, we have
$$\textrm{Ext} ^i _{L_p / L_p '} (W, H^{j}(K_{\Sigma _0}) ^{L_p '} _{\mathcal{O}/\varpi ^s \mathcal{O}, \mf{m}}) = 0$$
for $i > n-1$ and hence $H^{n-1}(K_{\Sigma _0})_{\mathcal{O}/\varpi ^s \mathcal{O}, \mf{m}} ^{L_p '}$ is of finite injective dimension as a representation of $L_p / L_p '$. The group $L_p / L_p '$ is finite and hence we can use the lemma proved above. Applying it to $\Gamma = L_p/ L_p'$ and $V = H^{n-1}(K_{\Sigma _0})_{\mathcal{O}/\varpi ^s \mathcal{O}, \mf{m}} ^{L_p'}$ shows that in fact $H^{n-1}(K_{\Sigma _0})_{\mathcal{O}/\varpi ^s \mathcal{O}, \mf{m}} ^{L_p'}$ is injective as a representation of $L_p / L_p'$ and so $\textrm{Ext} ^i _{L_p / L_p '}(W, H^{n-1}(K_{\Sigma _0})_{\mathcal{O}/\varpi ^s \mathcal{O}, \mf{m}} ^{L_p'}) = 0$ for $i \not = 0$

Consider Hochschild-Serre spectral sequence for a pro-scheme $\varprojlim _{K_p} M(K_pK_{\Sigma _0} K_0 ^{\Sigma})$ over a scheme $M(K_p ' K_{\Sigma _0} K_0 ^{\Sigma})$ after localisation at $\mf{m}$
$$H^i(L_p ', H^{j}(K_{\Sigma _0})_{\mathcal{O}/\varpi ^s \mathcal{O}, \mf{m}}) \Rightarrow H^{i+j}(M(K_p ' K_{\Sigma _0} K_0 ^{\Sigma}), \mathcal{O}/\varpi ^s \mathcal{O} ) _{\mf{m}}$$
 By assumption that $\mathfrak{m}$ is cohomologically non-Eisenstein, we know that $H^{i+j}(M(K_p ' K_{\Sigma _0} K_0 ^{\Sigma}), \mathcal{O}/\varpi ^s \mathcal{O} ) _{\mf{m}} = 0$ for $i+j \not = n-1$ and so we conclude that $H^i(L_p ', H^{n-1}(K_{\Sigma _0})_{\mathcal{O}/\varpi ^s \mathcal{O}, \mf{m}}) = 0$ for $i \not = 0$. 

Because the functor which takes $I$ to $I^{L_p '}$, where $I$ is $L_p$-representation, maps injective objects to injective objects, we can derive the functor $(Hom _{L_p}(W, - ) ^{L_p '})^{L_p / L_p'}$ to obtain a spectral sequence
$$\textrm{Ext} ^i _{L_p / L_p '}(W, H^j(L_p ', H^{n-1}(K_{\Sigma _0})_{\mathcal{O}/\varpi ^s \mathcal{O}, \mf{m}})) \Rightarrow \textrm{Ext} ^{i+j} _{L_p }(W, H^{n-1}(K_{\Sigma _0})_{\mathcal{O}/\varpi ^s \mathcal{O}, \mf{m}})$$
As $H^j(L_p ', H^{n-1}(K_{\Sigma _0})_{\mathcal{O}/\varpi ^s \mathcal{O}, \mf{m}}) = 0$ for $j \not = 0$, the spectral sequence degenerates to:
$$\textrm{Ext} ^i _{L_p / L_p '}(W, H^{n-1}(K_{\Sigma _0})_{\mathcal{O}/\varpi ^s \mathcal{O}, \mf{m}} ^{L_p'}) \simeq \textrm{Ext} ^{i} _{L_p }(W, H^{n-1}(K_{\Sigma _0})_{\mathcal{O}/\varpi ^s \mathcal{O}, \mf{m}})$$
We have proved above that $H^{n-1}(K_{\Sigma _0})_{\mathcal{O}/\varpi ^s \mathcal{O}, \mf{m}} ^{L_p'}$ is injective as a representation of $L_p / L_p'$ and so $\textrm{Ext} ^i _{L_p / L_p '}(W, H^{n-1}(K_{\Sigma _0})_{\mathcal{O}/\varpi ^s \mathcal{O}, \mf{m}} ^{L_p'}) = 0$ for $i \not = 0$, which implies by the above isomorphism that $\textrm{Ext} ^{i} _{L_p }(W, H^{n-1}(K_{\Sigma _0})_{\mathcal{O}/\varpi ^s \mathcal{O}, \mf{m}}) = 0$ for $i \not = 0$, i.e. $H^{n-1}(K_{\Sigma _0})_{\mathcal{O}/\varpi ^s \mathcal{O}, \mf{m}}$ is an injective representation of $L_p$ over $\mathcal{O} /\varpi ^s \mathcal{O}$ as $W$ is an arbitrary finitely generated smooth representation.

b) After a) $H^{n-1}(K_{\Sigma _0})_{\mathcal{O}/\varpi ^s \mathcal{O}, \mf{m}}$ is an injective representation of $L_p$ over $\mathcal{O} /\varpi ^s \mathcal{O}$. Let us look again at the spectral sequence
$$\textrm{Ext} ^i _{L_p} (W, H^{j}(K_{\Sigma _0})_{\mathcal{O}/\varpi ^s \mathcal{O}, \mf{m}}) \Rightarrow H^{i+j}(M(K_p K_{\Sigma _0} K_0 ^{\Sigma}), \mathcal{W} ^{\vee}) _{\mf{m}}$$
to see that, as $\textrm{Ext} ^i _{L_p} (W, H^{n-1}(K_{\Sigma _0})_{\mathcal{O}/\varpi ^s \mathcal{O}, \mf{m}}) = 0$ for $i > 0$, we have $H^{i+j}(M(K_p K_{\Sigma _0} K_0 ^{\Sigma}), \mathcal{W} ^{\vee}) _{\mf{m}} =0$ for all $i+j \not =n-1$.

\end{proof}

\begin{rema}

Observe that point b) tells that assuming that our ideal $\mathfrak{m}$ is cohomologically non-Eisenstein, we obtain a vanishing result for $H^{\bullet}(M(K_p K_{\Sigma _0} K_0 ^{\Sigma}), \mathcal{W} ^{\vee}) _{\mf{m}}$ for all local systems coming from finitely generated smooth representations $W$ of $L_p$ over $\mathcal{O} /\varpi ^s \mathcal{O}$. For similar results in the literature, see \cite{mt} and \cite{di} where the authors obtain vanishing results for different Shimura varieties also after localising at a Hecke ideal. In a slightly different vein, a general vanishing result for automorphic sheaves on Shimura varieties of PEL-type is formulated and proved in \cite{ls}. The vanishing result is proved without localising at a Hecke ideal, but with additional conditions on considered sheaves. 

\end{rema}

\begin{prop}
Let $K_p$ be a compact open subgroup of $G(\mb{Q}_p)$. Then $\left( \widehat{H} ^{n-1} _{E , \mf{m}, \Sigma} \right) ^{K_p}$ is an $E[G_{\Sigma _0}]$-module of finite length. 
\end{prop}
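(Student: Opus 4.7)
The plan is to identify $V := (\widehat{H}^{n-1}_{E,\mf{m},\Sigma})^{K_p}$, level by level in $K_{\Sigma_0}$, with a classical étale cohomology group of the compact Harris--Taylor Shimura variety, then to apply decomposition~(A) to realize $V$ as a direct sum of automorphic contributions, and finally to bound that indexing set.

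First, for each allowable $K_{\Sigma_0}$ I would revisit the Hochschild--Serre argument from the proof of Lemma~4.4(a). Its output is that, thanks to the cohomologically non-Eisenstein hypothesis (killing the rows $j \neq n-1$) together with the injectivity of $H^{n-1}(K_{\Sigma_0})_{\mc{O}/\varpi^s\mc{O},\mf{m}}$ as a smooth $L_p$-representation (killing $\textrm{Ext}^i$ for $i > 0$), the spectral sequence degenerates to
\[
\bigl(H^{n-1}(K_{\Sigma_0})_{\mc{O}/\varpi^s\mc{O},\mf{m}}\bigr)^{L_p} \simeq H^{n-1}\bigl(M(K_p K_{\Sigma_0} K_0^\Sigma),\mc{O}/\varpi^s\mc{O}\bigr)_{\mf{m}}.
\]
Taking $\varprojlim_s$ and tensoring with $E$, and using that the Harris--Taylor Shimura variety is proper (so its étale cohomology with $\mc{O}$-coefficients is finitely generated and already $\varpi$-adically complete), yields
\[
V^{K_{\Sigma_0}} \simeq H^{n-1}\bigl(M(K_p K_{\Sigma_0} K_0^\Sigma), E\bigr)_{\mf{m}},
\]
finite-dimensional over $E$. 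In particular $V$ is admissible as a smooth $E[G_{\Sigma_0}]$-module.

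Second, I would apply decomposition~(A) to this classical cohomology with trivial weight $W = E$:
\[
V^{K_{\Sigma_0}} \simeq \bigoplus_\pi \pi_p^{K_p} \otimes \pi_{\Sigma_0}^{K_{\Sigma_0}} \otimes (\pi^\Sigma)^{K_0^\Sigma} \otimes W_\pi^{n-1},
\]
where $\pi$ runs over cuspidal automorphic representations of $G(\mb{A})$ whose $\pi_\infty$ is cohomological for the trivial local system and whose unramified Hecke eigensystem $\chi_{\pi^\Sigma}$ is congruent to $\mf{m}$ modulo $\varpi$. The factor $(\pi^\Sigma)^{K_0^\Sigma}$ is one-dimensional, since $K_0^\Sigma$ is hyperspecial. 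Passing to $\varinjlim$ over allowable $K_{\Sigma_0}$ yields
\[
V \simeq \bigoplus_\pi \pi_p^{K_p} \otimes \pi_{\Sigma_0} \otimes W_\pi^{n-1}
\]
as an $E[G_{\Sigma_0}]$-module, each $\pi_{\Sigma_0}$ being an irreducible admissible representation of $G_{\Sigma_0}$.

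The main obstacle is to bound this indexing set uniformly in $K_{\Sigma_0}$. For any single allowable $K_{\Sigma_0}$ the set is immediately finite: $\mb{T}(K_p K_{\Sigma_0} K_0^\Sigma)_{\mf{m}}$ is a finite $\mc{O}$-algebra (as it acts faithfully on the finitely generated $\mc{O}$-module $V^{K_{\Sigma_0}}$), and by strong multiplicity one for $G$ the contributing $\pi$'s correspond bijectively to its finitely many generic points. Uniformity in $K_{\Sigma_0}$ follows from the Harris--Taylor attachment of Galois representations: the contributing $\pi$'s give rise to $p$-adic Galois representations deforming $\bar\rho_{\mf{m}}$ with a prescribed Hodge--Tate type (from trivial cohomological weight at $\infty$) and a constraint on the local type at $p$ (from $\pi_p^{K_p} \neq 0$), and via an $R \simeq \mb{T}$-type identification the corresponding deformation ring carries only finitely many characteristic-zero points. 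Granted this finiteness, $V$ is a finite direct sum of terms of the form $(\pi_p^{K_p} \otimes W_\pi^{n-1}) \otimes \pi_{\Sigma_0}$ with each $\pi_{\Sigma_0}$ irreducible admissible, and is therefore of finite length as an $E[G_{\Sigma_0}]$-module.
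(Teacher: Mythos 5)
Your first two steps (identifying $V^{K_{\Sigma_0}}$ with the classical cohomology $H^{n-1}(M(K_pK_{\Sigma_0}K_0^{\Sigma}),E)_{\mf{m}}$ via the Hochschild--Serre degeneration, and then decomposing automorphically via (A) with trivial weight) match the paper's proof, which simply cites Proposition 3.3 for this. You also correctly isolate the crux: the finiteness of the set of contributing $\pi$ must be uniform in $K_{\Sigma_0}$, since a priori new, more deeply ramified $\pi_{\Sigma_0}$ could appear as $K_{\Sigma_0}$ shrinks.

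However, your resolution of that crux is where the gap lies. You invoke ``an $R\simeq\mb{T}$-type identification'' to conclude that the relevant deformation ring of $\bar\rho_{\mf{m}}$ has finitely many characteristic-zero points. No such modularity-lifting input is available here: the paper assumes only that $\mf{m}$ is cohomologically non-Eisenstein, not any of the Taylor--Wiles-type hypotheses ($\bar\rho_{\mf{m}}$ absolutely irreducible with adequate image, $p$ large, etc.) that an $R=\mb{T}$ theorem would require, and the finiteness of $\mb{Q}_p$-points of such a deformation ring is itself essentially equivalent to the deep statement you are trying to prove. The paper's argument is far more elementary and is the idea you are missing: every contributing $\pi$ has associated Galois representation $\rho_\pi$ lifting $\bar\rho_{\mf{m}}$, and by Livn\'e's Proposition 1.1 in \cite{li} the Swan conductor of a lift at each place in $\Sigma_0$ equals that of the reduction, while the tame part of the conductor is trivially bounded; hence the conductor of $\rho_\pi$ away from $p$ --- and so, via local Langlands at the split places, the conductor of $\pi_{\Sigma_0}$ --- is bounded independently of $K_{\Sigma_0}$. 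Combined with the bound on the $p$-conductor coming from $\pi_p^{K_p}\neq 0$ and the fixed (trivial) cohomological weight, the classical finiteness of automorphic representations of bounded level and weight gives the uniform finiteness you need. (Your appeal to strong multiplicity one at a fixed level is also unnecessary: finite-dimensionality of the classical cohomology already bounds the contributing $\pi$ at each level.) With Livn\'e's bound in place of the $R\simeq\mb{T}$ step, your argument closes correctly.
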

\begin{proof}
By Proposition 3.3 (and the discussion in the section 4 of \cite{he}) we have
$$\left( \widehat{H} ^{n-1} _{E , \mf{m}, \Sigma} \right) ^{K_p} = \bigoplus _{\pi} \pi _p ^{K_p} \otimes \pi _{\Sigma _0}$$
where $\pi$ runs over automorphic representations which are cohomological of trivial weight, are unramified outside $\Sigma$, have nonzero $K_p$ invariants and where we have denoted $\pi _{\Sigma _0} = \otimes _{l \in \Sigma _0} \pi _l$. Moreover to each representation $\pi$ as above one can attach a p-adic Galois representation $\rho _{\pi}$ of $Gal(\bar{F}/F)$ such that for all places $v=ww^c$ of $F^+$ which are split in $F$, $\pi _v \circ i_w$ corresponds to $\rho _{\pi, w}$ by the Local Langlands correspondence ($i_w$ is an isomorphism between $G(F ^+ _v)$ and $GL_n(F_w)$) and each $\rho _{\pi}$  is a lift of a mod p Galois representation $\bar{\rho} _{\mf{m}}$ associated to $\mf{m}$ (see the section 5 for the precise definition of $\bar{\rho} _{\mf{m}}$, which we do not need here). There are only finitely many automorphic lifts of $\bar{\rho} _{\mf{m}}$ of given weight, the set of possible ramification and which have non-zero $K_p$-invariants. Indeed, all such automorphic lifts will be of bounded conductor: non-zero $K_p$-invariants force a bound on $p$-conductor and the outside-$p$ part of the conductor is bounded by the result of Livne (see proposition 1.1 in \cite{li} where is proved an equality of Swan conductors of a lift and the reduction; this implies the result as the conductor is the sum of tame part and Swan conductor and the tame part has the obvious bound). Thus, there are only finitely many $\pi$ in the sum above and hence we can conclude as each $\pi _p ^{K_p}$ is finite dimensional and $\left( \widehat{H} ^{n-1} _{E , \mf{m}, \Sigma} \right) ^{K_p}$ is admissible.
\end{proof}

\begin{coro}
Let $K_p$ be a compact open subgroup of $G(\mb{Q}_p)$. Then $\left( H ^{n-1} _{k , \mf{m}, \Sigma} \right) ^{K_p}$ is finitely generated as an $k[G_{\Sigma _0}]$-module.
\end{coro}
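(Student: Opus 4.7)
The plan is to deduce the corollary from Proposition 4.6 via an integral intermediary. Because $\mf{m}$ is cohomologically non-Eisenstein, at each finite level $H^i(S_{K_pK_{\Sigma_0}K_0^\Sigma}, \mc{O})_\mf{m}$ vanishes for $i\ne n-1$ and is $\mc{O}$-finitely generated and torsion-free for $i=n-1$ (use the long exact sequence from $0\to\mc{O}\xrightarrow{\varpi}\mc{O}\to k\to 0$ and Nakayama at each finite level, which is possible since the $S_{K_pK_{\Sigma_0}K_0^\Sigma}$ are proper). In particular each such module is already $\varpi$-adically complete, and the Hochschild--Serre argument used in the proof of Lemma 4.4 identifies $\widehat{H}^{n-1}(K_{\Sigma_0})^{K_p}_{\mc{O},\mf{m}}$ with $H^{n-1}(S_{K_p K_{\Sigma_0} K_0^\Sigma}, \mc{O})_\mf{m}$, and similarly with $E$- and $k$-coefficients. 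Setting
$$M := \varinjlim_{K_{\Sigma_0}} H^{n-1}(S_{K_p K_{\Sigma_0} K_0^\Sigma}, \mc{O})_\mf{m},$$
this yields an $\mc{O}$-flat, admissible smooth $\mc{O}[G_{\Sigma_0}]$-module with $M\otimes_{\mc{O}} E \simeq (\widehat{H}^{n-1}_{E,\mf{m},\Sigma})^{K_p}$ and $M/\varpi M \simeq (H^{n-1}_{k,\mf{m},\Sigma})^{K_p}$.

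By Proposition 4.6, $M\otimes E$ has finite length, hence is finitely generated, over $E[G_{\Sigma_0}]$. Lift a finite generating set to elements $v_1,\dots,v_r\in M$ (after clearing $\varpi$-denominators) and let $M_0$ denote the $\mc{O}[G_{\Sigma_0}]$-submodule they generate; then $M_0$ is finitely generated with $M_0\otimes E = M\otimes E$, so the quotient $Q := M/M_0$ is $\mc{O}$-torsion. The corollary reduces to showing that $Q/\varpi Q = 0$: for then $M_0/\varpi M_0 \twoheadrightarrow M/\varpi M = (H^{n-1}_{k,\mf{m},\Sigma})^{K_p}$ exhibits the latter as a quotient of a finitely generated $k[G_{\Sigma_0}]$-module.

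The main obstacle is this last Nakayama step. The admissibility of $M$ (each $M^{K_{\Sigma_0}}$ is a finitely generated $\mc{O}$-module, with rank $\sum_\pi(\dim \pi_p^{K_p})(\dim \pi_{\Sigma_0}^{K_{\Sigma_0}})$ determined by the decomposition (A) used in the proof of Proposition 4.6), together with $\mc{O}$-flatness of $M$ (which gives an injection $M^{K_{\Sigma_0}}/\varpi \hookrightarrow (M/\varpi M)^{K_{\Sigma_0}}$), controls $Q^{K_{\Sigma_0}}$ level by level inside a finite-dimensional $k$-space. A Nakayama argument inside each such piece, compatible under shrinking $K_{\Sigma_0}$, then forces $Q/\varpi Q = 0$. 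Alternatively, one can bypass $Q$ by imitating the proof of Proposition 4.6 directly: choose $\mc{O}$-lattices $\mc{L}_\pi \subset \pi_p^{K_p}\otimes\pi_{\Sigma_0}$ in each of the finitely many summands in that decomposition; each $\mc{L}_\pi$ is finitely generated over $\mc{O}[G_{\Sigma_0}]$, and their direct sum maps to $M$ with cokernel killed after reduction modulo $\varpi$ by the same admissibility argument, giving a surjection from a finitely generated $k[G_{\Sigma_0}]$-module onto $(H^{n-1}_{k,\mf{m},\Sigma})^{K_p}$.
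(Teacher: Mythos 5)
Your reduction to an integral statement is the right move and matches the paper's strategy up to a point: you correctly identify $M=(\widehat{H}^{n-1}_{\mc{O},\mf{m},\Sigma})^{K_p}$ as an $\mc{O}$-flat lattice in the finite-length representation $(\widehat{H}^{n-1}_{E,\mf{m},\Sigma})^{K_p}$ of Proposition 4.6, and the identification $M/\varpi M\simeq (H^{n-1}_{k,\mf{m},\Sigma})^{K_p}$ (via the non-Eisenstein hypothesis and Hochschild--Serre) is fine. The gap is in the step you yourself flag as ``the main obstacle.'' Having chosen a finitely generated sub-lattice $M_0\subset M$ with torsion quotient $Q=M/M_0$, the assertion that admissibility plus ``a Nakayama argument inside each finite-dimensional piece, compatible under shrinking $K_{\Sigma_0}$, forces $Q/\varpi Q=0$'' is not a proof and, as stated, is not a correct principle. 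Nakayama's lemma requires finite generation (or a compact topological module over a complete local ring), which is exactly what is at stake; and an admissible smooth $\mc{O}$-torsion module with all $Q^{K_{\Sigma_0}}$ finite can perfectly well have $Q/\varpi Q\neq 0$ --- any infinitely generated admissible smooth $k[G_{\Sigma_0}]$-module is an example. The question of whether a $G_{\Sigma_0}$-stable lattice in a finite-length smooth $E$-representation is finitely generated over $\mc{O}[G_{\Sigma_0}]$ is a genuinely nontrivial theorem, not a formal consequence of admissibility; your argument essentially restates the problem. Your alternative route has the same gap: you assert without justification that the lattices $\mc{L}_\pi$ are finitely generated and that the cokernel of $\bigoplus\mc{L}_\pi\to M$ dies mod $\varpi$, which is again the same unproved claim.

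The paper closes this gap by citing Vign\'eras (Proposition 3.3 of \cite{vi}): every $\mc{O}$-stable lattice in a smooth $E$-representation of finite length of $GL_n(\mb{Q}_l)$, $l\neq p$, is finitely generated. With that input, $M$ itself is finitely generated over $\mc{O}[G_{\Sigma_0}]$ and the reduction mod $\varpi$ immediately gives the corollary, with no need to introduce $M_0$ or $Q$ at all. To repair your write-up, replace the Nakayama step by an appeal to this result (or supply an independent proof of it), noting that $G_{\Sigma_0}$ is a product of groups of the type it covers.
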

\begin{proof} 
By the proposition above, $\left( \widehat{H} ^{n-1} _{E , \mf{m}, \Sigma} \right) ^{K_p}$ is of finite length. We know that every $\mc{O}$-stable lattice in a smooth $E$-representation of $GL_n(\mb{Q}_l)$ of finite length is finitely generated (see proposition 3.3 in \cite{vi}). Hence $\left( \widehat{H} ^{n-1} _{\mc{O} , \mf{m}, \Sigma} \right) ^{K_p}$ is finitely generated. Because $\mf{m}$ is cohomologically non-Eisenstein we have 
$$\left( \widehat{H} ^{n-1} _{\mc{O} , \mf{m}, \Sigma} \right) ^{K_p} / \varpi \left( \widehat{H} ^{n-1} _{\mc{O} , \mf{m}, \Sigma} \right) ^{K_p} \simeq \left( H ^{n-1} _{k , \mf{m}, \Sigma} \right) ^{K_p}$$
and so we conclude.
\end{proof}

\begin{coro}
If $K_p$ is a pro-p open subgroup of $G$, and if $K_{\Sigma _0} \subset G_{\Sigma _0}$ is an allowable level, chosen so that $K_pK_{\Sigma _0}K_0^{\Sigma _0}$ is neat, then, for some $r>0$, there is an isomorphism $\widehat{H} ^{n-1} (K_{\Sigma _0}) _{\mathcal{O} , \mf{m}} \simeq C(K_p, \mathcal{O})^r$ of $\varpi$-adically admissible $K_p$-representations over $\mathcal{O}$.
\end{coro}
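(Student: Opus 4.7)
The proof reduces, via Pontryagin duality, to showing that the dual of $V := \widehat{H}^{n-1}(K_{\Sigma_0})_{\mathcal{O}, \mathfrak{m}}$ is a finitely generated free $\mathcal{O}[[K_p]]$-module; the injectivity provided by part (a) of the preceding lemma, together with the fact that $K_p$ is pro-$p$, will drive everything. Setting $V_s := H^{n-1}(K_{\Sigma_0})_{\mathcal{O}/\varpi^s\mathcal{O}, \mathfrak{m}}$, the cohomologically non-Eisenstein hypothesis (which, as noted after Definition 3.1, gives the vanishing of $H^i(-)_{\mathcal{O},\mathfrak{m}}$ for $i \neq n-1$ and torsion-freeness in degree $n-1$) together with the long exact sequence associated with $0 \to \mathcal{O} \xrightarrow{\varpi^s} \mathcal{O} \to \mathcal{O}/\varpi^s\mathcal{O} \to 0$ produces identifications $V_s \simeq V/\varpi^s V$, and hence $V \simeq \varprojlim_s V_s$.

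By the lemma just cited, each $V_s$ is a smooth injective $K_p$-representation with coefficients in $\mathcal{O}/\varpi^s\mathcal{O}$; admissibility follows because $V_s^{K_p'} \simeq H^{n-1}(M(K_p' K_{\Sigma_0} K_0^{\Sigma}), \mathcal{O}/\varpi^s\mathcal{O})_{\mathfrak{m}}$ is finite for every open compact $K_p' \subset K_p$. Pontryagin duality then identifies $M_s := V_s^{\vee}$ with a finitely generated projective $\mathcal{O}/\varpi^s[[K_p]]$-module. Since $K_p$ is pro-$p$ and the residue field has characteristic $p$, the completed group ring $\mathcal{O}/\varpi^s[[K_p]]$ is a (noncommutative) local ring with residue field $k$, so by Nakayama every finitely generated projective module is free: $M_s \simeq \mathcal{O}/\varpi^s[[K_p]]^{r_s}$ for some integer $r_s \geq 0$.

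To pin down $r_s$, observe that $r_s = \dim_k M_s \otimes_{\mathcal{O}/\varpi^s[[K_p]]} k$, which under duality equals $\dim_k V_1^{K_p}$. The latter is finite, being identified (just as in the proof of the preceding lemma) with $H^{n-1}(M(K_p K_{\Sigma_0} K_0^{\Sigma}), k)_{\mathfrak{m}}$, the \'etale cohomology of a quasi-projective variety with torsion coefficients; call this common value $r$. Finally, lifting a $k[[K_p]]$-basis of $M_1$ recursively through the surjections $M_s \twoheadrightarrow M_{s-1}$ (using projectivity to ensure such lifts exist) yields a compatible system of bases for which $M := \varprojlim_s M_s$ is free of rank $r$ over $\mathcal{O}[[K_p]]$. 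Dualising gives the claimed isomorphism $V \simeq C(K_p, \mathcal{O})^r$. I expect the main technical obstacle to be this compatibility-in-$s$ step, together with checking that Pontryagin duality behaves well under the inverse limit, rather than the freeness of any individual $M_s$.
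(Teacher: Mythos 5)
Your argument follows essentially the same route as the paper: dualize to get finitely generated projective, hence free, modules over the local ring $(\mathcal{O}/\varpi^s\mathcal{O})[[K_p]]$, show the rank $r_s$ is independent of $s$ (your computation of $r_s$ as $\dim_k V_1^{K_p}$ is just an explicit form of the topological Nakayama argument the paper invokes), and pass to the projective limit. The proposal is correct, and is in fact somewhat more careful than the paper about the final limit step.
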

\begin{proof}
As $K_p$ is a pro-p group, the completed group ring $(\mathcal{O} / \varpi ^s \mathcal{O})[[K_p]]$ is a (non-commutative) local ring, thus any non-zero finitely generated projective $(\mathcal{O} /\varpi ^s \mathcal{O})[[K_p]]$-module is isomorphic to $(\mathcal{O} /\varpi ^s \mathcal{O})[[K_p]]^r$ (theorem of Kaplansky). We dualize it and using lemma 4.4, corollary 4.7 and the isomorphism 
$$\widehat{H} ^{n-1} (K_{\Sigma _0}) _{\mathcal{O} , \mf{m}} / \varpi ^s \widehat{H} ^{n-1} (K_{\Sigma _0}) _{\mathcal{O} , \mf{m}} \simeq H^{n-1}(K_{\Sigma _0})_{\mathcal{O}/\varpi ^s \mathcal{O}, \mf{m}}$$
we conclude  that
$$\widehat{H} ^{n-1} (K_{\Sigma _0}) _{\mathcal{O} , \mf{m}} / \varpi ^s \widehat{H} ^{n-1} (K_{\Sigma _0}) _{\mathcal{O} , \mf{m}} \simeq C(K_p, \mathcal{O}/\varpi ^s \mathcal{O})^{r_s}$$
for each $s>0$ and some $r_s >0$. Observe that if we would prove that $r_s$ is independent of $s$ then we could pass to the projective limit in $s$ and arrive at the conclusion. Denote by $M = \widehat{H} ^{n-1} (K_{\Sigma _0}) _{\mathcal{O} , \mf{m}}$ and let $r = r_1$. Then the dual $M'$ of $M$ is  a $\mathcal{O}[[K_p]]$-module and moreover we have $M' / \varpi ^s M' \simeq ((\mathcal{O}/\varpi ^s \mathcal{O})[[K_p]])^{r_s}$. By applying topological Nakayama's lemma (see section 3 in \cite{bh}) to $M' / \varpi ^s M'$, ideal $\varpi( \mathcal{O}/\varpi ^s \mathcal{O})[[K_p]]$ and the local ring $(\mathcal{O}/\varpi ^s \mathcal{O})[[K_p]]$, we get $r_s = r$ as wanted. 

\end{proof}

\begin{prop}
If $K_{\Sigma _0} \subset G_{\Sigma _0}$ is an allowable level, then the space of $G(\mathbb{Z} _p)$-algebraic vectors $(\widehat{H} ^{n-1} (K_{\Sigma _0}) _{E , \mf{m}})_{G(\mathbb{Z}_p)-alg}$ is dense in $\widehat{H} ^{n-1} (K_{\Sigma _0}) _{E , \mf{m}}$.
\end{prop}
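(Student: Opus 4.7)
The plan is to combine Corollary 4.8, which exhibits $V:=\widehat{H}^{n-1}(K_{\Sigma_0})_{E,\mf m}$ as $C(K_p,E)^r$ for a pro-$p$ open subgroup $K_p\subset G(\mb Q_p)$, with a standard $p$-adic approximation argument.

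First I would observe that since $K_p$ is open in $G(\mb Z_p)$, every $K_p$-algebraic vector of $V$ is automatically $G(\mb Z_p)$-algebraic (take $H=K_p$ in the defining condition of Definition 2.1). Thus it suffices to prove that the $K_p$-algebraic vectors of $V$ form a dense subspace.

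Via the $K_p$-equivariant isomorphism $V\simeq C(K_p,E)^r$ of Corollary 4.8, the problem reduces to showing that the $K_p$-algebraic vectors of $C(K_p,E)$ under the right regular action are dense. After possibly shrinking $K_p$ to a uniform pro-$p$ subgroup, the exponential map identifies $K_p$ with a polydisc $\mb Z_p^d$ (where $d=\dim_{\mb Q_p}\mb G$); the restriction to $K_p$ of matrix coefficients of algebraic representations of $\mb G$ then yields polynomial expressions in these coordinates, and by Mahler's expansion in dimension $d$ the polynomials are dense in $C(\mb Z_p^d,E)$.

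The main technical difficulty is the last step: one must verify that matrix coefficients of algebraic representations of $\mb G$ span a dense $E$-subalgebra of $C(K_p,E)$, which is handled by expanding such matrix coefficients explicitly via the formal group law on $K_p$ and then invoking Mahler's theorem. As an alternative route, one can combine Proposition 3.3 (after localising at $\mf m$ and taking $K_{\Sigma_0}$-invariants) with Corollary 4.8: the former identifies $V_{alg}$ with $\bigoplus_i H^{n-1}(K_{\Sigma_0},\mc V_{W_i})_{E,\mf m}\otimes_{B_i}W_i^\vee$, and density then follows by a dualisation argument on the continuous dual of $V$, which by Corollary 4.8 is a free $\mc O[[K_p]]$-module of rank $r$ whose annihilator of $V_{alg}$ can be shown to vanish.
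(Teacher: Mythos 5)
Your opening reduction is where the argument breaks down. The proposition concerns $G(\mb{Z}_p)$-algebraic vectors, i.e.\ vectors generating a finite-dimensional subspace on which all of $G(\mb{Z}_p)$ acts through the restriction of an algebraic representation of $\mb{G}$; this is \emph{not} the notion of locally algebraic vectors from Definition 2.1. Taking $H=K_p$ in that definition shows a $K_p$-algebraic vector is \emph{locally} algebraic, but the finite-dimensional $G(\mb{Z}_p)$-representation it generates is in general only of the form $\bigoplus_i \sigma_i \otimes W_i$ with $\sigma_i$ smooth, and it contains no non-zero $G(\mb{Z}_p)$-algebraic vector unless some $\sigma_i^{G(\mb{Z}_p)}\neq 0$. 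Concretely, via Proposition 3.3 the $K_p$-algebraic vectors see every automorphic $\pi$ with $\pi_p^{K_p}\neq 0$, whereas the $G(\mb{Z}_p)$-algebraic vectors see only those unramified at $p$ --- and that distinction is the entire point of the proposition, since the hyperspecial level at $p$ is what yields good reduction and hence crystallinity in Corollary 4.11. Density of $K_p$-algebraic vectors is therefore a strictly weaker statement and does not imply the one asked for.

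The paper's proof confronts exactly this issue: starting from the freeness of the dual over $E\otimes_{\mc{O}}\mc{O}[[K_p]]$ given by Corollary 4.8, it upgrades to projectivity over $E\otimes_{\mc{O}}\mc{O}[[G(\mb{Z}_p)]]$ (using that invariants under the finite group $G(\mb{Z}_p)/K_p$ are exact in characteristic zero), so that $\widehat{H}^{n-1}(K_{\Sigma_0})_{E,\mf{m}}$ embeds $G(\mb{Z}_p)$-equivariantly as a direct summand of $C(G(\mb{Z}_p),E)^s$, and only then applies Mahler --- to $G(\mb{Z}_p)$ itself, realised as a clopen subset of $\mb{Z}_p^{dn^2+1}$ in matrix-entry coordinates, in which polynomial functions genuinely are $G(\mb{Z}_p)$-algebraic. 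This also exposes a secondary flaw in your Mahler step: in exponential coordinates on a uniform $K_p$, matrix coefficients of algebraic representations are analytic power series, not polynomials, so the identification you assert does not hold as stated; one must work with linear (matrix-entry) coordinates. Finally, your ``alternative route'' presupposes that the annihilator of $V_{alg}$ in the dual vanishes, which is equivalent to the assertion being proved, so it is circular as written.
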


\begin{proof}
We choose $K_p$ sufficiently small and such that it is normal in $G(\mathbb{Z} _p)$.  Recall that the Galois group of $M(K_{\Sigma _0})=\varprojlim _{K _p '} M(K_p ' K_{\Sigma _0} K_0 ^{\Sigma})$ (the limit is taken over compact open subgroups of $G$) over $M(G(\mathbb{Z} _p) K_{\Sigma _0} K_0 ^{\Sigma})$ is equal to $G(\mathbb{Z} _p)$.

By the above corollary, the topological dual $\widehat{H} ^{n-1} (K_{\Sigma _0}) _{E , \mf{m}}'$ is free as a module over $E \otimes _{\mathcal{O}} \mathcal{O} [[K_p]]$ and this implies that $\widehat{H} ^{n-1} (K_{\Sigma _0}) _{E , \mf{m}} '$ is projective as a module over $E \otimes _{\mathcal{O}} \mathcal{O} [[G(\mathbb{Z}_p)]]$. Indeed, it follows from the isomorphism of functors:
$$Hom _{E \otimes _{\mathcal{O}} \mathcal{O} [[G(\mathbb{Z} _p)]]}(\widehat{H} ^{n-1} (K_{\Sigma _0}) _{E , \mf{m}}',-) \simeq Hom _{E \otimes _{\mathcal{O}} \mathcal{O} [[K_p]]}(\widehat{H} ^{n-1} (K_{\Sigma _0}) _{E , \mf{m}}',-) ^{G(\mathbb{Z} _p) /K_p}$$
as the target is exact, because of the freeness of $\widehat{H} ^{n-1} (K_{\Sigma _0}) _{E , \mf{m}}'$ over $E \otimes _{\mathcal{O}} \mathcal{O} [[K_p]]$ and the fact that passing to invariants under a finite group $G(\mathbb{Z} _p)/ K_p$ is exact as $E$ is of characteristic 0.
 
Hence, dualising it again, we find that $\widehat{H} ^{n-1} (K_{\Sigma _0}) _{E , \mf{m}}$ may be $G(\mb{Z}_p)$-equivariantly embedded as a topological direct summand of $C(G(\mb{Z}_p), E)^s$ for some $s>0$. To prove that $(\widehat{H} ^{n-1} (K_{\Sigma _0}) _{E , \mf{m}})_{G(\mb{Z}_p)-alg}$ is dense in $\widehat{H} ^{n-1} (K_{\Sigma _0}) _{E , \mf{m}}$ it suffices to prove that $C(G(\mb{Z}_p), E) _{G(\mb{Z}_p)-alg}$ is dense in $C(G(\mb{Z}_p), E)$.

Because $G(\mathbb{Z} _p) \simeq \mb{Z} _p ^{\times} \times \prod _{v|p} GL_n(\mb{Z} _p)$ is an open, closed subset of $\mathbb{Z} _p ^{dn^2+1}$, we can consider continuous functions on $G(\mb{Z}_p)$ as continuous functions on $\mathbb{Z} _p ^{dn^2+1}$. By the result of Mahler on expansions of continuous p-adic functions, we know that each function on $\mathbb{Z} _p ^{dn^2+1}$ can be written as a power series, so the set of polynomials on $\mathbb{Z} _p ^{dn^2+1}$ is dense. Restricting approximating polynomials to $G(\mb{Z}_p)$ shows that the polynomial functions on $G(\mb{Z}_p)$ are dense in $C(G(\mb{Z}_p),E)$. We conclude by observing that this set of polynomials is contained in $C(G(\mb{Z}_p), E) _{G(\mb{Z}_p)-alg}$.
\end{proof}

Let $\mathbb{T}  _{\Sigma, \mf{m}} = \varprojlim _{K _{\Sigma _0}} \mathbb{T}  (K_{\Sigma _0}) _{\mf{m}}$ where the limit runs over all the allowable levels for $\mf{m}$ and make the following definition:

\begin{defi}

We say that a closed point $\mathfrak{p} \in Spec \mathbb{T}  _{\Sigma, \mf{m}}[1/p]$ (resp. of $Spec \mathbb{T}  (K_{\Sigma _0}) _{\mf{m}} [1/p]$) is an automorphic point if the system of Hecke eigenvalues $Spec \mathbb{T}  _{\Sigma, \mf{m}}[1/p] \rightarrow \kappa (\frak{p})$ (resp. $Spec \mathbb{T}  (K_{\Sigma _0}) _{\mf{m}} [1/p] \ra \kappa (\frak{p})$) determined by $\frak{p}$ arises from an automorphic, cohomological (appearing in the decomposition of $H^{n-1}(\mathcal{V} _W)$ for some representation $W$) representation.

\end{defi}

Remark that $\mf{p}$ is an automorphic point if and only if $\widehat{H} ^{n-1} _{E, \mf{m}, \Sigma} [\frak{p}] _{la}$ is non-zero. This follows from the proposition 3.3.

\begin{defi}
If $K_{\Sigma _0} \subset G_{\Sigma _0}$ is any allowable level for $\mf{m}$, then we let $C(K_{\Sigma _0})$ denote the subset (of crystalline points) of closed points $\frak{p} \in Spec \mathbb{T} (K_{\Sigma _0}) _{\mf{m}} [1/p]$ that are automorphic and whose associated Galois representations (that is $W _{\pi} ^{n-1}$ from the decomposition (A) in section 3 for all $\pi$ which corresponds to $\mathfrak{p}$) are crystalline at each $v|p$, $v \in F$. Let $C$ denote the subset of closed points $\frak{p} \in Spec \mathbb{T}  _{\Sigma, \mf{m}} [1/p]$ that are automorphic and whose associated Galois representations are crystalline at each $v|p$.
\end{defi}

Also here, we can remark that $\mf{p}$ is a crystalline point if and only if $\widehat{H} ^{n-1} _{E, \mf{m}, \Sigma} [\frak{p}] _{la}$ is non-zero and the Galois action on it is crystalline.

\begin{coro}

The direct sum $\bigoplus _{\frak{p} \in C } \widehat{H} ^{n-1} _{E, \mf{m}, \Sigma} [\frak{p}] _{la} $ is dense in $\widehat{H} ^{n-1} _{E, \mf{m}, \Sigma}$.

\end{coro}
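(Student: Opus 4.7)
The plan is to upgrade the density of $G(\mb{Z}_p)$-algebraic vectors established in Proposition~4.9 to density of the crystalline locally analytic subspaces, using the automorphic decomposition of Proposition~3.3 together with local--global compatibility at $p$ for Harris--Taylor Shimura varieties. First I pass Proposition~4.9 to the inductive limit over allowable levels $K_{\Sigma_0}$ for $\mf{m}$, which gives that $\bigl(\widehat{H}^{n-1}_{E,\mf{m},\Sigma}\bigr)_{G(\mb{Z}_p)\text{-alg}}$ is dense in $\widehat{H}^{n-1}_{E,\mf{m},\Sigma}$. Since $G(\mb{Z}_p)$-algebraic vectors are in particular locally algebraic, Proposition~3.3 identifies them inside
$$\bigl(\widehat{H}^{n-1}_{E,\mf{m},\Sigma}\bigr)_{la} \;\simeq\; \bigoplus_{i\in I} H^{n-1}(\mc{V}_{W_i})_{E,\mf{m},\Sigma} \otimes_{B_i} W_i^{\vee},$$
where each summand decomposes further via (A) over cuspidal cohomological automorphic representations $\pi$ of weight $W_i$ unramified outside $\Sigma$ and congruent to $\mf{m}$. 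On the $\pi$-isotypic piece the group $G(\mb{Q}_p)$ acts as the smooth representation $\pi_p$ tensored with the algebraic representation $W_i^{\vee}$.

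The next step is to pin down which of these contributions actually supply $G(\mb{Z}_p)$-algebraic vectors. An algebraic representation of the connected reductive group $\mb{G}$ restricted to $G(\mb{Z}_p)$ is analytic and determined by its derivative on the Lie algebra; a smooth $G(\mb{Z}_p)$-action factors through a finite quotient and therefore has trivial Lie algebra action. Hence the smooth factor of $\pi_p \otimes W_i^{\vee}$ must act trivially on any vector whose $G(\mb{Z}_p)$-span carries an algebraic extension, which forces the $\pi_p$-component to sit in $\pi_p^{G(\mb{Z}_p)}$. Consequently only automorphic representations $\pi$ with $\pi_p$ unramified at every $v\mid p$ contribute to $\bigl(\widehat{H}^{n-1}_{E,\mf{m},\Sigma}\bigr)_{G(\mb{Z}_p)\text{-alg}}$.

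For such a $\pi$, local--global compatibility for Harris--Taylor Shimura varieties (cf. section~5 below) yields that the associated Galois representation $\rho_\pi$ is crystalline at each $v \mid p$ of $F$: at each split place $v = ww^c$, unramifiedness of the local Langlands parameter together with good reduction of the Shimura variety at $p$ implies crystallinity of $\rho_{\pi,w}$. The closed point $\mf{p}_\pi \in \mathrm{Spec}\,\mb{T}_{\Sigma,\mf{m}}[1/p]$ therefore lies in $C$, so
$$\bigl(\widehat{H}^{n-1}_{E,\mf{m},\Sigma}\bigr)_{G(\mb{Z}_p)\text{-alg}} \;\subseteq\; \bigoplus_{\mf{p}\in C} \widehat{H}^{n-1}_{E,\mf{m},\Sigma}[\mf{p}]_{la},$$
and density of the left-hand side forces density of the right-hand side. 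The main subtle step I expect to be the identification of $G(\mb{Z}_p)$-algebraic vectors with the unramified-at-$p$ automorphic contributions, that is ensuring that no automorphic representation ramified at $p$ sneaks into the algebraic subspace; once this is in hand, the crystallinity of the associated Galois representations is a direct consequence of the local--global compatibility results already available in the Harris--Taylor setting.
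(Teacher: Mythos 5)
Your argument follows the paper's proof almost step for step --- reduce to a fixed allowable level, invoke Proposition 4.9 for density of the $G(\mb{Z}_p)$-algebraic vectors, identify these inside the locally algebraic vectors via Proposition 3.3 and the decomposition (A), and observe that only automorphic representations $\pi$ with $\pi_p^{G(\mb{Z}_p)} \neq 0$ can contribute --- but it diverges at the decisive final step, where crystallinity of the associated Galois representations must be established. You appeal to local--global compatibility at $p$, deducing crystallinity of $\rho_{\pi,w}$ from unramifiedness of $\pi_v \circ i_w$. The paper explicitly refuses to assume any local--global compatibility at $p$ (which is why it only claims the sum runs over a possibly smaller subset of $C(K_{\Sigma_0})$): the compatibility cited in Section 5, from [CHT], identifies $\rho_{\pi,w}$ with $\pi_v \circ i_w$ only at split places away from $p$, and the $p$-adic Hodge-theoretic statement ``$\pi_p$ unramified $\Rightarrow$ $\rho_{\pi,w}$ crystalline'' is a genuinely deeper theorem that is not among the paper's inputs. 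Instead, the paper argues geometrically: at hyperspecial level at $p$ the Shimura variety (and the universal abelian scheme over it) has good reduction at every $v \mid p$ by Kottwitz and Langlands--Rapoport--Zink, the local systems $\mc{V}_{W_i}$ are cut out by idempotents from the cohomology of that abelian scheme, and Fontaine's crystalline comparison conjecture (proved by Tsuji) then shows that every Galois representation occurring in $\widehat{H}^{n-1}(K_{\Sigma_0})_{E,\mf{m}}^{G(\mb{Z}_p)}$ --- in particular each $W_\pi^{n-1}$ --- is crystalline. You should either substitute this comparison-theorem argument for your appeal to local--global compatibility, or else supply a precise reference for crystalline local--global compatibility at $p$ in the Harris--Taylor setting; as written, that step rests on an input the rest of the argument does not provide. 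The remainder of your proof (the containment of the $G(\mb{Z}_p)$-algebraic vectors in $\bigoplus_{\mf{p} \in C}\widehat{H}^{n-1}_{E,\mf{m},\Sigma}[\mf{p}]_{la}$ and the resulting density) is fine and matches the paper's logic.
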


Here $\widehat{H} ^{n-1} _{E, \mf{m}, \Sigma} [\frak{p}]$ means the subrepresentation of $\widehat{H} ^{n-1} _{E, \mf{m}, \Sigma}$ on which $\mf{p}$ acts trivially.

\begin{proof}
First of all, observe that it suffices to prove that $\bigoplus _{\frak{p} \in C(K_{\Sigma _0})} \widehat{H} ^{n-1}(K_{\Sigma _0}) _{E, \mf{m}} [\frak{p}] _{la} $ is dense in $\widehat{H} ^{n-1}(K_{\Sigma _0}) _{E, \mf{m}}$. for any allowable level $K_{\Sigma _0} \subset G_{\Sigma _0}$. Proposition above shows that $(\widehat{H} ^{n-1} (K_{\Sigma _0}) _{E , \mf{m}})_{G(\mathbb{Z}_p)-alg}$ is dense in $\widehat{H} ^{n-1} (K_{\Sigma _0}) _{E , \mf{m}}$. Thus $E[G](\widehat{H} ^{n-1} (K_{\Sigma _0}) _{E , \mf{m}})_{G(\mathbb{Z}_p)-alg}$ (the $E[G]$-representation generated by $(\widehat{H} ^{n-1} (K_{\Sigma _0}) _{E , \mf{m}})_{G(\mathbb{Z}_p)-alg}$) is also dense in $\widehat{H} ^{n-1} (K_{\Sigma _0}) _{E , \mf{m}}$. From proposition 3.4 and the formula $H^{n-1}(S_K, \mathcal{V} _W) = \bigoplus _{\pi} \pi _f ^K \otimes W _{\pi} ^{n-1}$, we deduce:

$$ E[G](\widehat{H} ^{n-1} (K_{\Sigma _0}) _{E , \mf{m}})_{G(\mathbb{Z}_p)-alg}  \simeq \left( \bigoplus _{i \in I} \bigoplus _{\ \pi \textrm{ is } W_i-\textrm{coh.}} (\pi _f ^K \otimes W _{\pi} ^{n-1})_{\mathfrak{m}} \otimes _{B_i} W_{i}^{\vee} \right) _{G(\mathbb{Z}_p)-alg} \simeq $$ $$\simeq \bigoplus _{\frak{p}} \widehat{H} ^{n-1} (K_{\Sigma _0}) _{E , \mf{m}} [\mathfrak{p}] _{la}$$

where the sum is taken over all automorphic, cohomological representations which have non-zero $G(\mb{Z} _p)$-algebraic vector, in particular $\pi _v ^{GL_n(\mb{Z} _p)} \not = 0$ for each $v|p$, $v\in F$. In order to see that the sum on the right goes through $C(K _{\Sigma _0})$ (actually, as we don't assume any local-global compatibility, over a possibly smaller subset of $C(K _{\Sigma _0})$), recall that Shimura varieties of PEL-type, when the level at $p$ is hyperspecial, have good reduction at all primes $v$ dividing $p$; this appears in the section 5 of \cite{ko2} and follows from the results of Langlands, Rapoport and Zink. Applying the crystalline conjecture of Fontaine (proved, for example, in \cite{ts}) to each term appearing in the isomorphism from the proposition 3.3, just like in 4.5.4 of \cite{chl}, we conclude that the representations appearing in the cohomology $\widehat{H} ^{n-1} (K_{\Sigma _0}) _{E , \mf{m}} ^{G(\mb{Z} _p)}$ are crystalline at each $v|p$. Here we are also using the fact that the local systems $\mc{V} _{W_i}$ are obtained by tensor operations from the cohomology of an abelian scheme over our Shimura variety (precisely, the cohomology of a Shimura variety with coefficients in $\mc{V} _{W_i}$ equals the image by an idempotent associated to $W_i$ of the cohomology with trivial coefficients of the universal abelian scheme over our Shimura variety) and so we can indeed refer to the classic version of crystalline conjecture which does not involve coefficients. For this, see also 4.5.4 in \cite{chl}.

We conclude that representations appearing in $\widehat{H} ^{n-1} (K_{\Sigma _0}) _{E , \mf{m}} [\mathfrak{p}] _{la} ^{G(\mb{Z} _p)}$ are crystalline at each $v|p$ and those are the Galois representations $W _{\pi} ^{n-1}$. So, the set of points over which we take the sum on the righthand side in the above formula is the subset of $C(K_{\Sigma _0})$. In particular, the result follows.

\end{proof}

\begin{coro}

The set $C$ is Zariski dense in $Spec \mathbb{T}  _{\Sigma, \mf{m}} $, that is, $\bigcap _{\frak{p} \in C} \frak{p} = 0$. 

\end{coro}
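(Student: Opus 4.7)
The plan is to deduce Zariski density directly from the density statement of the preceding corollary, by a continuity-and-faithfulness argument. Given any $T \in \bigcap_{\mf{p} \in C} \mf{p} \subset \mb{T}_{\Sigma, \mf{m}}$, the goal is to show that $T=0$ in $\mb{T}_{\Sigma, \mf{m}}$.

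First, unwind what membership in $\mf{p}$ means: for each $\mf{p} \in C$, $T$ annihilates the $\mf{p}$-isotypic subspace $\widehat{H}^{n-1}_{E, \mf{m}, \Sigma}[\mf{p}]$, and in particular its subspace of locally analytic vectors $\widehat{H}^{n-1}_{E, \mf{m}, \Sigma}[\mf{p}]_{la}$. Summing, $T$ annihilates $\bigoplus_{\mf{p} \in C} \widehat{H}^{n-1}_{E, \mf{m}, \Sigma}[\mf{p}]_{la}$. By the preceding corollary this direct sum is dense in $\widehat{H}^{n-1}_{E, \mf{m}, \Sigma}$, and the Hecke action is continuous (it is $\varpi$-adically continuous on each $\widehat{H}^{n-1}(K_{\Sigma_0})_{\mc{O}, \mf{m}}$, hence on the direct limit after inverting $p$); so $T$ acts as zero on the whole of $\widehat{H}^{n-1}_{E, \mf{m}, \Sigma}$.

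To translate this into the vanishing of $T$ itself, write $\widehat{H}^{n-1}_{E, \mf{m}, \Sigma} = \varinjlim_{K_{\Sigma_0}} \widehat{H}^{n-1}(K_{\Sigma_0})_{E, \mf{m}}$. Then the image $T_{K_{\Sigma_0}}$ of $T$ in $\mb{T}(K_{\Sigma_0})_{\mf{m}}$ annihilates $\widehat{H}^{n-1}(K_{\Sigma_0})_{E, \mf{m}}$ for every allowable $K_{\Sigma_0}$. The cohomologically non-Eisenstein hypothesis forces the localized complex $R\Gamma(K_{\Sigma}K^{\Sigma}_0)_{\mf{m}}$ to be concentrated in degree $n-1$ (vanishing of cohomology outside $n-1$ mod $\varpi$ propagates by d\'evissage mod every power of $\varpi$, and by the remark after the definition of cohomologically Eisenstein, $H^{n-1}$ is torsion-free). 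Hence $\mb{T}(K_{\Sigma_0})_{\mf{m}}$, defined as derived-category endomorphisms of this complex, embeds into $\mathrm{End}(\widehat{H}^{n-1}(K_{\Sigma_0})_{\mc{O}, \mf{m}})$ and in turn into $\mathrm{End}(\widehat{H}^{n-1}(K_{\Sigma_0})_{E, \mf{m}})$. Thus each $T_{K_{\Sigma_0}}=0$, giving $T=0$ in the projective limit.

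The real mathematical content is the preceding corollary, so this last step is mostly bookkeeping. The only subtle point to get right is the faithfulness of the Hecke action at each finite level: that is exactly where the cohomologically non-Eisenstein hypothesis is needed a second time, to kill the higher cohomology that would otherwise obstruct passing from endomorphisms in the derived category to honest endomorphisms of $H^{n-1}$.
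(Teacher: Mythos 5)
Your proposal is correct and follows essentially the same route as the paper: any $t$ in the intersection annihilates the dense subspace $\bigoplus_{\mf{p}\in C}\widehat{H}^{n-1}_{E,\mf{m},\Sigma}[\mf{p}]$, hence all of $\widehat{H}^{n-1}_{E,\mf{m},\Sigma}$, and faithfulness of the Hecke action then gives $t=0$. The paper simply asserts the faithfulness, whereas you supply the (correct) justification via concentration of the localized complex in degree $n-1$; that is a welcome elaboration, not a different argument.
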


\begin{proof}
Suppose that $t \in \bigcap _{\frak{p} \in C} \frak{p}$. Then $t$ annihilates $\bigoplus _{\frak{p} \in C} \widehat{H} ^{n-1} _{E, \mf{m}, \Sigma} [\frak{p}] $ and hence, by the above corollary, $t$ annihilates $\widehat{H} ^{n-1} _{E, \mf{m}, \Sigma} $. But $\mathbb{T}  _{\Sigma, \mf{m}} $ acts faithfully on $\widehat{H} ^{n-1} _{E, \mf{m}, \Sigma}$ and hence $t=0$.
\end{proof}

\section{Eisenstein ideals and Galois representations}

In this section we will discuss the conjectural relation between cohomologically non-Eisenstein ideals and Galois representations. 



The following discussion is based on \cite{he}. Let us denote by $\mf{p}$ a minimal prime ideal of $\mb{T}(K)$. Then $\mf{p}$ determines, for each prime $l$ that splits in $L$, is unramified in $F$ and does not divide the level of $K$, an unramified representation $\pi _{\mf{p}, l}$ of $G(\mb{Q} _l)$ over $\bar{\mb{Q}} _p$ (actually, over $\mb{T}(K) _{\mf{p}}$). Take $W$ to be an irreducible representation of $G$ over $E$. We have
$$H^{n-1}(S_K, \mathcal{V} _W) _{\mf{p}} = \bigoplus _{\pi} H^{n-1}(\mathcal{V} _W)[\pi]^K$$
where the sum is taken over irreducible admissible representations $\pi$ of $G(\mb{A})$ such that
\newline

(i) $\pi ^K \not = 0$

(ii) $\pi _{\infty}$ is cohomological for $W$

(iii) $\pi _{l} \simeq \pi _{\mf{p}, l}$ for all $l$ that split in $L$, are unramified in $F$ and do not divide the level of $K$. For $l$ which does not split in $L$, see lemma 2.2 in \cite{ty}, for the characterisation of $\pi _l$ using base change for unitary groups. 
\newline
\newline
To such a $\pi$ one can associate a Galois representation $\rho _{\pi} : G_F \ra GL_n(\bar{\mb{Q}} _p)$ such that for all primes $v = w w^c$ of $F^+$ which split in $F$,  $\rho _{\pi, w}$ corresponds to $\pi _v \circ i_w$ via the Local Langlands correspondence, where $i_w$ is an isomorphism between $G(F ^+ _v)$ and $GL_n(F_w)$ (see proposition 3.3.4 of \cite{cht}). Moreover, if $\rho _{\pi}$ is irreducible, then $H^{n-1}(\mathcal{V} _W)[\pi]$ is isomorphic (up to semisimplification) to some number of copies of $\pi \otimes _{\bar{\mb{Q}} _p} \rho _{\pi}$ (see proposition 4.1 in \cite{he}). We will denote $\rho _{\pi}$ also by $\rho _{\mf{p}}$ as it depends only on $\mf{p}$, because by Chebotarev density theorem a Galois representation is determined by a dense subset and so, in our case, it is uniquely determined by primes which are unramified in $F$, split in $L$, and do not divide the level $K$. Indeed, one should observe that the set $\{Frob_{w}| w$ is a place of $F$ not in $\Sigma, w \not = w ^c\}$, where $\Sigma$ is some finite set, is dense in $G_{F,\Sigma}$. Denote by $S$ the set of primes of $F^+$ below those of $\Sigma$. By Chebotarev density theorem, we know that $\{Frob_{v}| v$ is a place of $F^+$ not in $S, v = ww ^c$ in $F\}\subset G_{F^+, S}$ has density $\frac{1}{2}$. But we have $G_{F, \Sigma} \subset G_{F^+, S} \twoheadrightarrow Gal(F/F^+)$, thus $G_{F, \Sigma}$ is of index 2 in $G_{F^+, S}$, so that $\{Frob_{w}| w$ is a place of $F$ not in $\Sigma, w \not = w ^c\}$ is dense in $G_{F,\Sigma}$.

Let $\mf{m}$ be the unique maximal ideal containing $\mf{p}$. We have a representation $\bar{\rho} _{\mf{m}} :  G_F \ra GL_n(\mb{T} (K) / \mf{m})$ via reduction modulo $\mf{m}$. That is, choose some automorphic representation $\pi$ corresponding to $\mf{p}$ and let $\rho _{\pi} : G_F \ra GL_n(\bar{\mb{Q}} _p)$ be the Galois representations associated to $\pi$ as above. Choose an invariant lattice in $\rho _{\pi}$, reducing and semisimplyfying gives us the desired representation $\bar{\rho} _{\mf{m}}$. The reader may want to compare this with the proof of proposition 3.4.2 in \cite{cht}. By density again, we see that $\bar{\rho} _{\mf{m}}$ is well-defined up to semisimplification and does not depend on the chosen minimal ideal $\mf{p}$. 

We expect:
\begin{conj}
If the Galois representation $\bar{\rho} _{\mf{m}}$ is absolutely irreducible, then $\mf{m}$ is cohomologically non-Eisenstein.
\end{conj}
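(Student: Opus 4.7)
The plan is to first establish the characteristic-zero analogue, and then transfer the vanishing to mod $p$ coefficients; this second step is where the genuine difficulty lies.

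First I would work with $\mb{Q}_p$-coefficients. Because the Shimura varieties of Harris-Taylor type are proper, Matsushima's formula gives a decomposition
$$H^i((S_K)_{\bar{F}}, \mb{Q}_p) \otimes_{\mb{Q}_p} \mb{C} \simeq \bigoplus_{\pi} m(\pi)\, \pi_f^{K} \otimes H^i(\mf{g}, K_\infty;\pi_\infty \otimes W),$$
the sum running over cuspidal automorphic representations of $G(\mb{A})$. If some $\pi$ contributes in a degree $i\neq n-1$, then by Vogan-Zuckerman theory $\pi_\infty$ is non-tempered, so the Arthur parameter of $\pi$ is non-tempered. Via the base change $BC(\pi)$ to $GL_n / F$ (as in the work of Clozel-Harris-Labesse and Shin), the associated $p$-adic Galois representation $\rho_\pi \colon G_F \to GL_n(\bar{\mb{Q}}_p)$ from Section 5 of the excerpt decomposes as a sum of strictly smaller Galois representations attached to the cuspidal constituents of $BC(\pi)$.

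Second, I would convert this into a statement about $\bar\rho_\mf{m}$. By the discussion preceding the conjecture, $\bar\rho_\mf{m}$ is (up to semisimplification) the mod-$\varpi$ reduction of any $\rho_\pi$ associated to a minimal prime $\mf{p} \subset \mf{m}$. If some non-tempered $\pi$ contributed to $H^i((S_K)_{\bar F}, \mb{Q}_p)_\mf{m}$ for $i\neq n-1$, then $\bar\rho_\mf{m}$ would be a summand of the semisimple reduction of a reducible representation, hence reducible. Contrapositively, absolute irreducibility of $\bar\rho_\mf{m}$ forces
$$H^i((S_K)_{\bar{F}}, \mb{Q}_p)_\mf{m} = 0 \quad \text{for all } i\neq n-1.$$

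The third and crucial step is to promote this to $\mb{F}_p$-coefficients. By the long exact sequence associated to $0 \to \mb{Z}_p \xrightarrow{\varpi} \mb{Z}_p \to \mb{F}_p \to 0$ and the remark after Definition 3.1, it suffices to show both that $H^i((S_K)_{\bar F}, \mb{Z}_p)_\mf{m} = 0$ for $i\neq n-1$ and that $H^{n-1}((S_K)_{\bar F}, \mb{Z}_p)_\mf{m}$ is torsion-free. The characteristic-zero vanishing gives only that the relevant groups are torsion; the content is to rule out torsion classes whose systems of Hecke eigenvalues reduce to $\mf{m}$ but do not lift to characteristic zero. I would attack this in one of two ways: either via a torsion-vanishing theorem for the cohomology of Harris-Taylor Shimura varieties in the spirit of Caraiani-Scholze (which rules out torsion outside the middle degree under a genericity hypothesis on $\bar\rho_\mf{m}$ at $p$), or via an Ihara-type/patching argument as developed by Emerton-Gee and Helm, showing that the completed cohomology module localized at $\mf{m}$ is free over a suitable local deformation ring when $\bar\rho_\mf{m}$ is absolutely irreducible; freeness implies both the vanishing and the torsion-freeness.

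The main obstacle is precisely this third step: absolute irreducibility of $\bar\rho_\mf{m}$ is almost certainly not enough on its own, because torsion cohomology can carry Hecke eigensystems that do not lift to characteristic zero (the phenomenon of torsion Galois representations). I would expect to need an additional ``genericity'' hypothesis on $\bar\rho_\mf{m}|_{G_{F_v}}$ for $v \mid p$, after which the argument becomes a combination of the characteristic-zero input above with the Caraiani-Scholze / Emerton-Gee-Helm technology alluded to at the end of the introduction of the paper.
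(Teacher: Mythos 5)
First, a point of order: the statement you are proving is presented in the paper as a \emph{conjecture}. The paper offers no proof of it; it only cites partial results in its direction (appendix A of \cite{he}, which establishes it under many additional hypotheses on $\bar{\rho}_{\mf{m}}$, and the Emerton--Gee theorem for $U(2,1)$ Shimura varieties, which requires a large-image condition and that $\bar{\rho}_{\mf{m}}|_{G_{\mb{Q}_p}}$ be $1$-regular and irreducible). So there is no proof in the paper to compare against, and your proposal has to be judged as an attack on an open problem.

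Your first two steps (the characteristic-zero argument) are a correct sketch of the standard strategy: a $\pi$ contributing to $H^i$ with $\mb{Q}_p$-coefficients in some degree $i \neq n-1$ has non-tempered $\pi_{\infty}$, hence a non-tempered Arthur parameter, and the attached Galois representation $\rho_{\pi}$ is then a direct sum of lower-dimensional pieces, so its semisimplified reduction --- which is $\bar{\rho}_{\mf{m}}$ --- is reducible; making this rigorous needs the endoscopic classification and base change for these unitary groups, which is available in the Harris--Taylor setting. The genuine gap is exactly where you locate it: cohomologically non-Eisenstein is defined via $\mb{F}_p$-coefficients, and absolute irreducibility of $\bar{\rho}_{\mf{m}}$ does not by itself exclude torsion classes in $H^i((S_K)_{\bar{F}}, \mb{Z}_p)_{\mf{m}}$ for $i \neq n-1$, nor torsion in degree $n-1$, whose Hecke eigensystems need not lift to characteristic zero. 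Both remedies you propose (torsion-vanishing theorems in the Caraiani--Scholze style, or patching/freeness arguments) require hypotheses strictly stronger than absolute irreducibility --- genericity conditions on $\bar{\rho}_{\mf{m}}$ at places above $p$, or Taylor--Wiles type conditions --- which is consistent with the shape of the partial results the paper cites. As a proof of the conjecture as literally stated your proposal therefore does not close, and you say as much yourself; but it correctly reduces the problem to a torsion-vanishing statement and correctly identifies why the conjecture is hard.
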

For a result in this direction, see appendix A in \cite{he}, where the conjecture is proved under many additional hypotheses on $\bar{\rho} _{\mf{m}}$. Recently, Matthew Emerton and Toby Gee proved a stronger theorem, which is close to the above conjecture for $U(2,1)$ Shimura varieties. Let us cite their theorem B (see also corollary 3.5.1) from \cite{eg}. We refer to their paper for neccessary definitions.
\begin{theo}
Let $X_K$ be a projective $U(2,1)$-Shimura variety of some sufficiently small level $K$. Let $\mf{m}$ be a maximal ideal of the Hecke algebra $\mb{T}(K)$ and let $\bar{\rho} _{\mf{m}} : G_F \ra GL_3(\bar{\mb{F}}_p)$ be the associated Galois representation. Suppose that we have $SL_3(k) \subset \bar{\rho}_{\mf{m}}(G_F) \subset \bar{\mb{F}}_p ^{\times} SL_3(k)$ for some finite extension $k/\mb{F}_p$ and that $\bar{\rho}_{\mf{m} | G_{\mb{Q}_p}}$ is 1-regular and irreducible. Then $\mf{m}$ is cohomologically non-Eisenstein.
\end{theo}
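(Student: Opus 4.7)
The plan is to prove vanishing of $H^i(X_K, \mb{F}_p)_{\mf{m}}$ for every $i \neq n-1 = 2$, where $X_K$ is the projective $U(2,1)$-Shimura surface of the statement. Since $\dim X_K = 2$, Poincar\'e duality gives $H^i(X_K, \mb{F}_p) \simeq H^{4-i}(X_K, \mb{F}_p(2))^{\vee}$ as $G_F$-representations, with the Hecke algebra acting on the dual through the anti-involution sending each Hecke operator to its transpose. Consequently, vanishing in degrees $3$ and $4$ reduces to vanishing in degrees $0$ and $1$ at a twisted maximal ideal $\mf{m}'$; the hypotheses (1-regularity and irreducibility at $p$, and the large-image condition on $\bar{\rho}_{\mf{m}}$) are all preserved by twisting by the cyclotomic character.

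For degree $0$, the group $H^0(X_K, \mb{F}_p)$ is a finite sum of copies of $\mb{F}_p$ indexed by the geometrically connected components of $X_K$, on which the Hecke algebra acts through the natural character of $G(\mb{A}_f)$ on $\pi_0(X_K)$, a character which factors through an abelian quotient. Any Galois representation obtained via Chebotarev from such a Hecke eigensystem is then a one-dimensional character, whereas $\bar{\rho}_{\mf{m}}$ is three-dimensional and irreducible. Hence $\mf{m}$ is not in the support of $H^0(X_K, \mb{F}_p)$.

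The crucial case is $i = 1$. I would suppose for contradiction that $H^1(X_K, \mb{F}_p)_{\mf{m}} \neq 0$ and extract, via Eichler--Shimura congruences and Chebotarev, a $G_F$-stable sub-quotient of $H^1(X_K, \mb{F}_p)_{\mf{m}}$ whose semisimplification contains $\bar{\rho}_{\mf{m}}$. Two ingredients then collide. First, the 1-regularity assumption places $\bar{\rho}_{\mf{m}}|_{G_{\mb{Q}_p}}$ in the range of Fontaine--Laffaille theory, forcing it to arise from a filtered $\varphi$-module with pairwise distinct Hodge--Tate weights modulo $p$; combined with irreducibility at $p$, this rules out any reduction that might be obtained by extension from smaller-dimensional pieces. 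Second, the integral $p$-adic comparison theorem, applied to the canonical integral model of $X_K$ (which has good reduction at $v \mid p$ by Kottwitz, since the level at $p$ is hyperspecial), forces every $G_{F_v}$-stable sub-quotient of $H^1_{\textrm{\'et}}(X_K, \mb{F}_p)$ to have Hodge--Tate weights contained in $\{0,1\}$. A three-dimensional 1-regular irreducible local representation cannot fit into a range of only two distinct Hodge--Tate weights, yielding the desired contradiction.

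The main obstacle I expect is the extraction step, since $\mb{F}_p$-\'etale cohomology carries no a priori Hodge filtration. The strategy is to lift to $\mb{Z}_p$-coefficients, controlling torsion by smoothness and projectivity of $X_K$ in the low degrees that concern us; apply an integral Fontaine--Laffaille comparison to $H^1_{\textrm{\'et}}(X_K, \mb{Z}_p)|_{G_{F_v}}$; reduce modulo $p$ while tracking the Hecke action so that localization at $\mf{m}$ commutes with all of these constructions; and combine the result with the Chebotarev extraction of $\bar{\rho}_{\mf{m}}$. Arranging this local-to-global interaction precisely enough so that the hypotheses on $\bar{\rho}_{\mf{m}}$ genuinely force the absence of three-dimensional irreducible pieces inside $H^1(X_K, \mb{F}_p)_{\mf{m}}$ is, I expect, the main technical content of \cite{eg}.
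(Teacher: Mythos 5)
The paper does not prove this statement at all: it is quoted verbatim as Theorem B of Emerton--Gee \cite{eg}, with the reader referred to that paper for both the definitions and the proof. So there is no internal argument to compare yours against; what can be assessed is whether your sketch is a faithful reconstruction of the external proof, and in outline it is. The reduction of degrees $3,4$ to degrees $0,1$ by Poincar\'e duality, the elimination of $H^0$ because the Hecke action there factors through the (abelian) component group while $\bar{\rho}_{\mf{m}}$ is irreducible of dimension $3$, and the elimination of $H^1$ by playing off $1$-regularity and irreducibility of $\bar{\rho}_{\mf{m}}|_{G_{\mb{Q}_p}}$ against the constraint that subquotients of $H^1$ of a variety with good reduction have Fontaine--Laffaille weights in $\{0,1\}$ --- these are indeed the moving parts of the Emerton--Gee argument.

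Two caveats. First, as you yourself concede, the entire analytic content --- that every irreducible $G_{F_v}$-subquotient of $H^i_{\textrm{\'et}}(X_{\bar F},\mb{F}_p)$ for a smooth projective $X$ with good reduction lies in the essential image of Fontaine--Laffaille theory with weights in $[0,i]$ --- is precisely the main theorem of \cite{eg} and is left as a black box; so the proposal is a correct frame around the theorem rather than a proof of it. Second, you underuse the hypothesis $SL_3(k)\subset\bar{\rho}_{\mf{m}}(G_F)$: it is not merely ``preserved by twisting,'' but is what guarantees, in the extraction step, that nonvanishing of $H^1(X_K,\mb{F}_p)_{\mf{m}}$ forces the full three-dimensional $\bar{\rho}_{\mf{m}}$ (and not just some constituent with the right Frobenius traces) to occur as a subquotient on which the local analysis can then be brought to bear. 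If you intend to flesh this out, that is the step where the large-image condition must enter explicitly.
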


Finally, let us remark, that the reader may want to compare our notion of a cohomologically Eisentein ideal with an Eisenstein ideal of Clozel-Harris-Taylor in \cite{cht} which is defined to be a maximal ideal $\mf{m}$ such that the associated representation $\bar{\rho}_{\mf{m}}$ is absolutely reducible. There is a conjecture B in \cite{cht} related to this notion.

\end{document}